\documentclass[11pt]{amsart}
\usepackage[mathscr]{eucal}
\usepackage{graphicx}
\usepackage{float}
\restylefloat{figure}

\usepackage{fullpage}
\newtheorem{theorem}{Theorem}[section]
\newtheorem{lemma}[theorem]{Lemma}

\newtheorem{corollary}[theorem]{Corollary}
\newtheorem{assumption}[theorem]{Assumption}
\def\NN{\hbox{I\kern-.2em\hbox{N}}}
\def\RR{\hbox{I\kern-.2em\hbox{R}}}

\newtheorem{definition}[theorem]{Definition}
\newtheorem{example}[theorem]{Example}

\newtheorem{remark}[theorem]{Remark}

\date{Last update \today}
\begin{document}

\title[Predictive control with noise]{Stabilisation of difference 
equations with noisy prediction-based control}
\author[E. Braverman, C. Kelly and A. Rodkina]
{E. Braverman, C. Kelly and A. Rodkina}

\address{Department of Mathematics, University of Calgary, Calgary, Alberta T2N1N4, Canada}
\email{maelena@math.ucalgary.ca}
\address{Department of Mathematics \\
The University of the West Indies, Mona Campus, Kingston, Jamaica} 
\email{conall.kelly@uwimona.edu.jm}
\address{Department of Mathematics  \\
The University of the West Indies, Mona Campus, Kingston, Jamaica}
\email{alexandra.rodkina@uwimona.edu.jm}

\thanks{The first author was supported by the NSERC grant RGPIN-2015-05976, 
all the authors were supported by American Institute of Mathematics SQuaRE program}
\thanks{Corresponding author email: {\tt conall.kelly@uwimona.edu.jm}}

\begin{abstract}
We consider the influence of stochastic perturbations on stability of a unique positive equilibrium 
of a difference equation subject to prediction-based control. 
These perturbations may be multiplicative 
$$x_{n+1}=f(x_n)-\left( \alpha + l\xi_{n+1} \right) (f(x_n)-x_n), \quad n=0, 1, \dots$$
if they arise from stochastic variation of the control parameter, or additive 
$$x_{n+1}=f(x_n)-\alpha(f(x_n)-x_n) +l\xi_{n+1}, \quad n=0, 1, \dots $$
if they reflect the presence of systemic noise.

We begin by relaxing the control parameter in the deterministic equation, and deriving a range of values for the parameter over which all solutions eventually enter an invariant interval. 
Then, by allowing the variation to be stochastic, we derive sufficient conditions (less restrictive than known ones for the unperturbed equation) 
under which the positive equilibrium will be globally a.s. asymptotically stable: 
i.e. the presence of noise improves the known effectiveness of prediction-based control.  
Finally, we show that systemic noise has a ``blurring'' effect on the positive equilibrium, which can be made arbitrarily small by 
controlling the noise intensity.  
Numerical examples illustrate our results.


{\bf AMS Subject Classification:} 39A50, 37H10, 34F05, 39A30, 93D15, 93C55

{\bf Keywords:} stochastic difference equations; prediction-based control, multiplicative noise, additive 
noise

\end{abstract}

\maketitle

\section{Introduction}
\label{sec:intr}

The dynamics of discrete maps can be complicated, and various methods may be introduced 
to control their asymptotic behaviour. In addition, both the intrinsic dynamics and the control may involve stochasticity.

We may ask the following of stochastically perturbed difference equations:
\begin{enumerate}
\item
If the original (non-stochastic) map has chaotic or unknown dynamics, can we stabilise the equation
by introducing a control with a stochastic component?
\item
If the non-stochastic equation is either stable or has known dynamics (for example, a stable two-cycle \cite{BR_2012}), do those dynamics persist when a stochastic perturbation is introduced?
\end{enumerate}
In this article, we consider both these questions in the context of prediction-based control (PBC, or predictive control).
Ushio and Yamamoto \cite{uy99} introduced PBC as a method of stabilising unstable periodic orbits of 
\begin{equation}
\label{eq:intr1}
x_{n+1}=f(x_n), \quad x_0>0, \quad n\in {\mathbb N}_0,
\end{equation}
where ${\mathbb N}_0=\{0,1,2, \dots, \}$.
The method overcomes some of the limitations of delayed feedback control (introduced by Pyragas~\cite{pyr}), 
and does not require the a priori approximation of periodic orbits, as does the OGY method developed by Ott 
et al~\cite{OGY}. 

The general form of PBC is 
$$x_{n+1}=f(x_n)-\alpha(f^k(x_n)-x_n), \quad x_0>0, \quad n\in {\mathbb N}_0,
$$
where $\alpha \in (0,1)$ and $f^k$ is the $k$th iteration of $f$. If $k=1$, PBC becomes
\begin{equation}
\label{eq:intr2}
x_{n+1}=f(x_n)- \alpha (f(x_n)-x_n)= (1-\alpha)f(x_n)+ \alpha x_n, \quad x_0>0, \quad n\in {\mathbb N}_0.
\end{equation}

Recently, it has been shown how PBC can be used to manage population size via population reduction by ensuring that the positive equilibrium of a class of one-dimensional maps commonly used to model population dynamics is globally asymptotically stable after the application of the control~\cite{FL2010}. Similar effects are also possible if it is not feasible to apply the control at every timestep. This variation on the technique is referred to as PBC-based pulse stabilisation~\cite{BerLizCAMWA,LizPotsche14}.

Here, we investigate the influence of stochastic perturbations on the ability of PBC to induce global asymptotic stability of a positive point equilibrium of a class of equations of the form \eqref{eq:intr1}. It is reasonable to introduce noise in one of two ways. First, the implementation of PBC relies upon a controlling agent to change the state of the system in a way characterised by the value of the control parameter $\alpha$. In reality we expect that such precise control is impossible, and the actual change will be characterised by a control sequence $\{\alpha_n\}_{n\in\mathbb{N}_0}$ with terms that vary randomly around $\alpha$ with some distribution. 
This will lead to a state-dependent, or multiplicative, stochastic perturbation. 
Second, the system itself may be subject to extrinsic noise, which may be modelled by a state-independent, or additive, perturbation.

The fact that stochastic perturbation can stabilise an unstable equilibrium has been understood since the 1950s: consider the well-known example of the pendulum of Kapica \cite{Kapica}. More recently, a general theory of stochastic stabilisation and destabilisation of ordinary differential equations has developed from \cite{Mao}: a comprehensive review of the literature is presented in \cite{AMR1}. This theory extends to functional differential equations: for example \cite{Appleby,AKMR2} and references therein. 

Stochastic stabilisation and destabilisation is also possible for difference equations; see for example \cite{ABR,AMR06}. However, the qualitative behaviour of stochastic difference equations may be dramatically different from that seen in the continuous-time case, and must be investigated separately. For example, in \cite{KR09}, solutions of a nonlinear stochastic difference equation with multiplicative noise arising from an Euler discretisation of an It\^o-type SDE are shown to demonstrate monotonic convergence to a point equilibrium with high probability. This behaviour is not possible in the continuous-time limit. 

Now, consider the structure of the map $f$. We impose the Lipschitz-type assumption on the function $f$ around 
the unique positive equilibrium $K$.
\begin{assumption}
\label{as:slope}
$f:[0,\infty) \to [0,\infty)$ is a continuous function, $f(x)>0$ for $x>0$,
$f(x)>x$ for $x \in (0,K)$, $f(x)<x$ for $x >K$, and there exists $M \geq 1$ such that 
\begin{equation}
\label{eq:Mcond}
|f(x)-K| \leq  M |x-K|.
\end{equation}
\end{assumption}
Note that under Assumption \ref{as:slope} function $f$ has only a single positive point equilibrium $K$. We will also suppose that $f$ is decreasing on an interval that includes $K$:
\begin{assumption}
\label{as:3}
There is a point $c<K$ such that $f(x)$ is monotone decreasing on $[c,\infty)$.
\end{assumption}

It is quite common for Assumptions~\ref{as:slope} and \ref{as:3} to hold for models of population dynamics, 
and in particular for models characterised by a unimodal map: we illustrate this with Examples 
\ref{ex:Ric}-\ref{ex:BH}. It follows from Singer~\cite{Singer} that, when additionally $f$ has a negative Schwarzian derivative $(Sf)(x)=f'''(x)/f'(x) - \frac{3}{2}(f''(x)/f'(x))^2<0$, the equilibrium $K$ is globally asymptotically stable if and only if it is locally asymptotically stable. In each case, as the system parameter grows, a stable cycle replaces a stable equilibrium which loses its stability, there are period-doubling bifurcations and eventually chaotic behaviour.

\begin{example}
\label{ex:Ric}
For the Ricker model
\begin{equation}
\label{eq:ricker}
x_{n+1} = x_n e^{r(1-x_n)}, \quad x_0>0, \quad n\in {\mathbb N}_0,
\end{equation}
Assumptions~\ref{as:slope} and \ref{as:3} both hold with $K=1$, and the global maximum is attained at $c=1/r<K=1$ for $r>1$. 
Let us note that for $r \leq 1$ the positive equilibrium is globally asymptotically stable and the convergence of solutions to $K$ is monotone.
However, for $r>2$ the equilibrium becomes unstable. 
\end{example}

\begin{example}
\label{ex:log}
The truncated logistic model 
\begin{equation}
\label{eq:logistic}
x_{n+1} = \max\left\{ r x_n (1-x_n), 0 \right\}, \quad x_0>0, \quad n\in {\mathbb N}_0,
\end{equation}
with $r>1$ and $c=\frac{1}{2} <K =1-1/r$, also
satisfies Assumptions~\ref{as:slope} and \ref{as:3}.
Again, for $r \leq 2$, the equilibrium $K$ is globally
asymptotically stable, with monotone convergence to $K$, while for $r>3$ the equilibrium $K$ is unstable.
\end{example}

\begin{example}
\label{ex:BH}
For the modifications of the Beverton-Holt equation
\begin{equation}
\label{eq:BHm1}
x_{n+1} = \frac{Ax_n}{1+Bx_n^{\gamma}},\quad A>1,~B>0,~\gamma >1, , \quad x_0>0, \quad n\in {\mathbb N}_0, 
\end{equation}
and
\begin{equation}
\label{eq:BHm2}
x_{n+1} = \frac{Ax_n}{(1+Bx_n)^{\gamma}},\quad A>1,~B>0,~\gamma>1, \quad x_0>0, \quad n\in {\mathbb N}_0
\end{equation}
Assumption~\ref{as:slope} holds. Also, \eqref{eq:BHm1} and \eqref{eq:BHm2} satisfy Assumption~\ref{as:3} as 
long as the 
point at which the map on the right-hand side takes its maximum value is less than that of the point equilibrium. 
If Assumption~\ref{as:3} is not satisfied, the function is monotone increasing up to the unique positive 
point equilibrium, and thus all solutions converge to the positive equilibrium,
and the convergence is monotone.
If all $x_n>K$, we have a monotonically decreasing sequence. 
If we fix $B$ in \eqref{eq:BHm1} and \eqref{eq:BHm2} and consider the growing $A$, the equation loses stability and experiences transition to chaos through a series of period-doubling bifurcations.
\end{example}
The article has the following structure. In Section 2 we relax the control parameter $\alpha$, replacing it with the variable control sequence $\{\alpha_n\}_{n\in\mathbb{N}_0}$, and yielding the equation
\begin{equation}
\label{eq:intr3}
x_{n+1}=f(x_n)- \alpha_n (f(x_n)-x_n)= (1-\alpha_n)f(x_n)+ \alpha_n x_n, \quad x_0>0, \quad n\in {\mathbb N}_0.
\end{equation}
We identify a range over which $\{\alpha_n\}_{n\in\mathbb{N}_0}$ may vary deterministically while still ensuring the global asymptotic stability of the positive equilibrium $K$. We confirm that, without imposing any constraints on the range of values over which the control sequence $\{\alpha_n\}_{n\in\mathbb{N}_0}$ may vary, there exists an invariant interval, containing $K$, under the controlled map. We then introduce constraints on terms of the sequence $\{\alpha_n\}_{n\in\mathbb{N}_0}$ which ensure that all solutions will eventually enter this invariant interval. 

In Section 3, we assume that the variation of $\alpha_n$ around $\alpha$ is bounded and stochastic, which 
results in a PBC equation with multiplicative noise of intensity $l$. After identifying constraints on 
$\alpha$ and $l$ under which a domain of local stability for $K$ exists for all trajectories, we 
demonstrate that the presence of an appropriate noise perturbation in fact ensures that almost all trajectories will eventually enter this domain of local stability, hence providing global a.s. asymptotic stabilisation of $K$. The known range of values of $\alpha$ under which this stabilisation occurs is larger than for the deterministic PBC equation, and in this sense the stochastic perturbation improves the stabilising properties of PBC.

In Section 4, we suppose that the noise is acting systemically rather than through the control parameter, 
which results in a PBC equation with an additive noise. In this setting it is possible to show that, under 
certain conditions on the noise intensity $l$, the noise causes a ``blurring'' of the positive equilibrium $K$ in the sense that the controlled solutions will enter and remain within a neighbourhood of $K$, and the size of that neighbourhood can be made arbitrarily small by an appropriate choice of $l$.

Finally, Section 5 contains some simulations that illustrate the results of the article, and a brief summary.

\section{Deterministic Equations with Variable PBC}
\label{sec:det}
We begin by relaxing the control variable in the deterministic PBC equation \eqref{eq:intr2}, both as a generalisation to equations of form \eqref{eq:intr3} and to support our analysis of the system with stochastically varying control in Section \ref{sec:3}. Deterministic PBC equation \eqref{eq:intr3} with variable control parameter may be written in the form
\begin{equation}
\label{eq:Fa}
x_{n+1}=F_{\alpha_n}(x_n),\quad x_0>0, \quad n\in {\mathbb N}_0,
\end{equation}
where
\begin{equation}
\label{eq:F}
F_{\alpha}(x):= \alpha x+ (1-\alpha)f(x).
\end{equation}

The following result extends \cite[Theorem 2.2]{BerLizCAMWA} to develop conditions on the magnitude of variation of $\alpha_n$ 
for solutions of \eqref{eq:Fa} to approach the positive equilbrium $K$ at some minimum rate.
\begin{lemma} 
\label{lem:PBC}
Let Assumption~\ref{as:slope} hold and each $\alpha_n$ satisfy $\alpha_n \in [a,1)$,
where  $$a \in \left( 1-\frac{1}{M},1 \right).$$ 
Let $\{x_n\}_{n\in\mathbb{N}_0}$ be any solution of \eqref{eq:Fa} with $x_0>0$. Then
\begin{enumerate}
\item[(i)]
the sequence $\{|x_n-K|\}_{n \in {\mathbb N}_0}$ is non-increasing;
\item[(ii)]
If there is $b \in (0,1)$ for which $\alpha_n \leq b <1$,
for any $x_0>0$
\[
\lim_{n\to\infty}x_n=K;
\]
\item[(iii)]
If in addition Assumption~\ref{as:3} holds, there exists $n_0 \in {\mathbb N}_0$ such that  $x_n \geq c$ for $n\geq n_0$ 
and
\begin{equation}
\label{decay}
\left| x_{n+1}-K \right| = \left| F_{\alpha_n}(x_{n})-K \right| \leq \gamma \left| x_{n}-K \right|,
\end{equation}
where
\begin{equation}
\label{gamma}
\gamma = \max \{ b, 1-a \}.
\end{equation}
\end{enumerate}
\end{lemma}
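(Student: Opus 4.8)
The backbone of all three parts is the single pointwise identity
\[
F_{\alpha}(x)-K = \alpha(x-K)+(1-\alpha)(f(x)-K),
\]
which, for $x\neq K$, I would rewrite as $F_\alpha(x)-K=\rho\,(x-K)$ with contraction factor $\rho=\alpha+(1-\alpha)t$ and $t:=(f(x)-K)/(x-K)$; the whole argument reduces to controlling the range of $t$. For part (i) I would first note that Assumption~\ref{as:slope} forces $t<1$: both $f(x)<x$ (for $x>K$) and $f(x)>x$ (for $x<K$) give $t<1$ after dividing by the signed quantity $x-K$, while \eqref{eq:Mcond} gives $|t|\le M$, so $t\in[-M,1)$. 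Since $\rho$ is increasing in $t$, it lies in $(\alpha-(1-\alpha)M,\,1)$. The constraint $\alpha_n\ge a>1-1/M$ is exactly what makes the lower endpoint exceed $-1/M\ge-1$, whence $|\rho|<1$ and, applied along the orbit, $\{|x_n-K|\}$ is non-increasing (strictly off the equilibrium). The case $x_n=K$ is trivial since $f(K)=K$.

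For part (ii) the difficulty, which I regard as the main obstacle, is that $\rho$ is \emph{not} uniformly bounded away from $1$: as $t\uparrow1$ (possible, e.g. as $x_n\to0$ when $f(0)=0$) the factor degrades to $1$, so a naive product estimate is unavailable. I would sidestep this by compactness. By (i) the sequence $\{|x_n-K|\}$ decreases to some $L\ge0$, and $\{x_n\}$ is bounded and stays in $(0,\infty)$. Assuming $L>0$, I would extract a subsequence with $x_{n_k}\to x^*$ and, using compactness of $[a,b]$ (where the hypothesis $\alpha_n\le b<1$ enters), $\alpha_{n_k}\to\alpha^*\in[a,b]$. Then $|x^*-K|=L>0$, so $x^*\neq K$; by continuity of $f$, $x_{n_k+1}\to F_{\alpha^*}(x^*)$ with $|F_{\alpha^*}(x^*)-K|=L$. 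But the pointwise strict contraction from (i), applied to $x^*$ with $\alpha^*\in[a,b]\subset(1-1/M,1)$, forces $|F_{\alpha^*}(x^*)-K|<|x^*-K|=L$, a contradiction; hence $L=0$. It is essential that $\alpha^*<1$ (otherwise $F_{\alpha^*}$ is the identity and no contraction occurs), which is precisely why this part needs $\alpha_n\le b<1$.

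For part (iii), Assumption~\ref{as:3} upgrades the range of $t$ once $x_n\ge c$. Since $f$ is decreasing on $[c,\infty)$ with $f(K)=K$, for $x_n\in[c,\infty)\setminus\{K\}$ the quantities $f(x_n)-K$ and $x_n-K$ have opposite signs, so $t\le0$; with $|t|\le M$ this gives $t\in[-M,0]$ and hence $\rho\in[\alpha_n-(1-\alpha_n)M,\,\alpha_n]$, so $|\rho|\le\max\{\alpha_n,\,(1-\alpha_n)M-\alpha_n\}$. The first term is at most $b$; for the second, monotonicity in $\alpha_n$ together with $\alpha_n\ge a$ gives $(1-\alpha_n)M-\alpha_n\le(1-a)M-a$, and here the strict inequality $a>1-1/M$ pays off, yielding $(1-a)M<1$ and thus $(1-a)M-a<1-a$. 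Therefore $|\rho|\le\max\{b,1-a\}=\gamma$, which is \eqref{decay}. Finally, to obtain $n_0$ with $x_n\ge c$ for $n\ge n_0$ I would simply invoke (ii): since $x_n\to K$ and $c<K$, choosing $n_0$ so that $|x_n-K|<K-c$ for $n\ge n_0$ gives $x_n>c$ on that tail, where the decay estimate applies. The only delicate bookkeeping is tying the admissible range $a>1-1/M$ to the sign and size of the contraction factor; everything else follows from the displayed identity and continuity.
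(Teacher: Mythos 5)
Your parts (i) and (iii) are correct, and they take a genuinely different route from the paper's proof: where the paper argues by cases on whether consecutive iterates lie on the same side of $K$ or switch sides (treating switches via the Lipschitz bound \eqref{eq:Mcond} and same-side steps via monotonicity), you encode everything in the single factor $\rho=\alpha+(1-\alpha)t$ with $t=(f(x)-K)/(x-K)$ and read the contraction off the range of $t$. This is arguably cleaner: in (i) the bound $t\in[-M,1)$ together with $a>1-1/M$ gives $|\rho|<1$ in one stroke, and in (iii) the bound $t\in[-M,0]$ (valid on $[c,\infty)$ by Assumption~\ref{as:3}) gives $|\rho|\le\max\{\alpha_n,\,(1-\alpha_n)M-\alpha_n\}\le\max\{b,1-a\}=\gamma$, unifying the paper's two cases in a single estimate. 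Your derivation of $n_0$ in (iii) from (ii) is the same as the paper's.

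Part (ii), however, has a genuine gap. Your compactness argument extracts $x_{n_k}\to x^*$ and invokes ``the pointwise strict contraction from (i), applied to $x^*$''. But that contraction was established only for points of $(0,\infty)\setminus\{K\}$: the inequality $t<1$ relies on $f(x)>x$ for $x\in(0,K)$, which says nothing at $x=0$. Assumption~\ref{as:slope} permits $f(0)=0$ --- and this is exactly the case for the Ricker and logistic maps motivating the paper --- and then $0$ is a fixed point of every $F_\alpha$, i.e. $t=1$ and $\rho=1$ at $x=0$, so no contraction holds there; indeed $|F_{\alpha^*}(0)-K|=K=|0-K|$ and your contradiction evaporates. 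Nothing you have said excludes $x^*=0$: part (i) only confines the orbit to $[K-|x_0-K|,\,K+|x_0-K|]$, whose left endpoint is $\le 0$ whenever $x_0\ge 2K$, and boundedness in $(0,\infty)$ does not prevent a subsequential limit at $0$. The gap is easily closed by showing $\liminf_{n\to\infty}x_n>0$: if $x_n\ge K$ for all $n$, every limit point is $\ge K$; otherwise let $i$ be the first index with $x_i<K$, and then for every $n\ge i$ part (i) gives $x_n\ge K-|x_n-K|\ge K-|x_i-K|=x_i>0$. With this one-line addendum, $x^*>0$ and your argument is complete. Note that the paper's proof of (ii) avoids the issue altogether: in the switching case it applies the factor $(1-a)$ at the infinitely many switches, which forces $|x_n-K|\to 0$ regardless of where the orbit lives, and in the eventually one-sided case the monotone limit is automatically positive.
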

\begin{proof}
We address each part in turn.
\begin{itemize}
\item[Part (i):]
First, we prove convergence in the case where the signs of $\{x_n-K\}_{n\in\mathbb{N}_0}$ are eventually constant: solutions eventually remain either above or below the positive equilibrium $K$. Suppose that there exists $n_1 \in {\mathbb N}_0$ such that  $x_j<K$ for $j \geq n_1$. Then the subsequence $\{x_n\}_{j\geq n_1}$ is monotone increasing, since by Assumption \ref{as:slope} and \eqref{eq:F},
\[
x_j<K\,\,\Rightarrow\,\,f(x_j)>x_j\,\,\Rightarrow\,\, F_{\alpha_j}(x_j)>x_j\,\,\Rightarrow\,\,
x_{j+1}>x_j.
\]

Next, we consider the case when the terms of $\{x_n-K\}_{n\in\mathbb{N}_0}$ change signs infinitely often. 
Note that we need to take into consideration only the indices $i$ where $x_i<K$ and $F_{\alpha_i}(x_i)>K$ 
or $x_{i}>K$ and $F_{\alpha_{i}}(x_{i}) < K$. At any $n$ where $(x_n-K)(x_{n+1}-K)>0$,
we have $|x_{n+1}-K|<|x_n-K |$. 
Subsequences of $\{x_n\}_{n\in\mathbb{N}_0}$ that do not switch in this way will approach $K$ monotonically, 
as proven above. We must prove that $|x_{i+1}-K|<|x_i-K|$ at these switches as well.   

Suppose first that $x_i \in (0, K)$ and $F_{\alpha_i}(x_i) > K$. Then $f(x_i)>K$ necessarily, since otherwise
$$
F_{\alpha_i}(x_i)=\alpha_i x_i+(1-\alpha_i)f(x_i)
\le \alpha_i K+(1-\alpha_i)K=K.
$$
It is also the case that $x_{i+1} \in (K,f(x_i))$, since
$$
x_{i+1}=F_{\alpha_i}(x_i)
\le \alpha_i f(x_i)+(1-\alpha_i)f(x_i)
= f(x_i).
$$

Note that $a \in (1-1/M,1)$ implies $1-a < \frac{1}{M}$.  Since $\alpha_i>a$, it follows that $(1-\alpha_i)M<1$, and we have from $F_{\alpha_i}(x_i) > K$ and \eqref{eq:F} that
\begin{eqnarray*} 
\left| x_{i+1}-K \right| & = &   F_{\alpha_i}(x_{i})-K   \\
& = &   (1-\alpha_i) (f(x_i)- K) - \alpha_i (K-x_i)  \\
& \leq &  (1-\alpha_i) M(K-x_i) - \alpha_i (K-x_i) \\
& \leq &  |x_i-K| - \alpha_i |x_i-K| \\ & \leq &  (1-a) \left| x_{i}-K \right|.
\end{eqnarray*}
By similar reasoning, if $x_i > K$ and $F_{\alpha_i}(x_i) < K$ we have $f(x_i) < K$ and $x_{i+1} \in (f(x_i),K)$. Therefore
\begin{eqnarray*} 
\left| x_{i+1}-K \right| & = & K -  F_{\alpha_n}(x_{i}) \\ & =  &  
 (1-\alpha_i) (K-f(x_i)) - \alpha_i (x_i-K)  \\
& \leq &  (1-\alpha_i) M(x_i-K) + \alpha_i (x_i-K) \\
& \leq &  (1-\alpha_i) \left| x_{i}-K \right| \\ & \leq &  (1-a) \left| x_{i}-K \right|.
\end{eqnarray*}
Thus, $\{|x_n-K|\}_{n \in {\mathbb N}_0}$ is a non-increasing sequence, and Part (i) of the statement of the lemma is verified.

\item[Part (ii):]
$\{|x_n-K|\}_{n \in {\mathbb N_0}}$ is a decreasing positive sequence if no terms of the sequence $\{x_n\}_{n\in\mathbb{N}_0}$ coincide with $K$. 
If $x_n < K$ for all $n\geq j$ then $\lim\limits_{n\to\infty}x_n=L>0$. This implies in turn that the left-hand side of
$$
x_{n+1}-x_n=(1-\alpha_n)(f(x_n)-x_n)
$$
tends to zero, and so the right-hand side also tends to zero.
From $1-\alpha_n \geq 1-b>0$ and continuity of $f$, we have $f(L)=L$, so the limit $L$
can only be $K$. The case where $x_j>K$, for all $j\geq n_1$ is treated similarly.
If $(x_{n+1}-K)(x_n-K)<0$ then $|x_{n+1}-K|\leq (1-a) |x_n-K|$, which implies 
\begin{equation}
\label{add_star}
\lim_{n \to \infty} \left| x_n-K \right| =0. 
\end{equation}
Therefore $\lim\limits_{n \to \infty}  x_n = K$, and Part (ii) of the statement of the lemma is confirmed. 
\item[Part (iii):]

Let Assumption~\ref{as:3} hold. By \eqref{add_star}, for any $c>0$ there exists some $n_0 \in {\mathbb N_0}$ such that, for $n\geq n_0$, $|x_{n}-K | \leq  K-c$, and  thus $x_{n} \in [c,\infty)$. Further we consider only $n \geq n_0$.
Also, it has been established above that under the common conditions holding for 
Parts (i)-(iii) in the statement of the lemma, $|x_n-K|$ is decreasing. Let $i$ be an index where a switch 
across the equilibirum $K$ occurs, i.e. $(x_i-K)(x_{i+1}-K)<0$. Then, from the analysis above,
\[
|x_{n+1}-K| \leq (1-a) |x_n-K| \leq \gamma |x_n-K|.
\]  
If $(x_n-K)(x_{n+1}-K)=0$, then $x_j=K$ for all $j>n$, so \eqref{decay} is satisfied in this situation. 
It remains to consider the case where $(x_n-K)(x_{n+1}-K)>0$, $n \geq n_0$.
Suppose first that $x_n<K$, $x_{n+1}<K$ for some $n\geq n_0$. Then $x_n \in [c,K]$, $f(x_n)>K$ and, as $\alpha_n \leq b \leq \gamma$,
\begin{eqnarray*} 
\left| x_{n+1}-K \right| & = & K -  F_{\alpha_n}(x_{n}) =   
(1-\alpha_n) (K-f(x_n)) + \alpha_n (K-x_n)  \\
& < &   \alpha_n (K-x_n) \leq b  (K-x_n) \leq \gamma |K-x_n|.
\end{eqnarray*}
When $x_n > K$, $x_{n+1}>K$ for some $n\geq n_0$, we have $f(x_n)<K$ and
\begin{eqnarray*} 
\left| x_{n+1}-K \right| & = & F_{\alpha_n}(x_{n}) -K 
=  (1-\alpha_n) (f(x_n)-K) + \alpha_n (x_n-K)  \\
& < &   \alpha_n (x_n-K) \leq b  (x_n-K) \leq \gamma |x_n-K|.
\end{eqnarray*}
Taking these results in aggregate we can conclude that $|x_{n+1}-K| \leq \gamma |x_n-K|$ for any $n\geq n_0$,
where $\gamma$ is defined in \eqref{gamma}.
\end{itemize}
\end{proof}

In the case of $\alpha_n \equiv \alpha$, we obtain the following corollary, highlighting the existence of an invariant interval under the map $F_\alpha$ when the control parameter $\alpha$ is constant.

\begin{corollary}
\label{new_add1}
If Assumptions~\ref{as:slope} and \ref{as:3} hold and $\displaystyle \alpha \in \left( 1 - \frac{1}{M}, 1 \right)$ 
then $F_{\alpha}$ defined by \eqref{eq:F} maps the interval $[c,2K - c]$ into $(c,2K-c)$, and  satisfies 
\begin{equation}
\label{contraction}
|F_{\alpha}(x)-K| \leq \gamma |x-K|, \quad x \geq c, 
\end{equation}
with 
\begin{equation}
\label{def_gamma}
\gamma=\max\{ \alpha, 1-\alpha \}.
\end{equation}
\end{corollary}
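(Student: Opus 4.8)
The plan is to obtain this as the constant-control specialisation of Lemma~\ref{lem:PBC}, recovering the contraction \eqref{contraction} from the pointwise estimates already established in Part~(iii) and then reading off the invariance from the strict inequality $\gamma<1$. Setting $\alpha_n\equiv\alpha$ corresponds to choosing $a=b=\alpha$ in the lemma, which is admissible precisely because $\alpha\in(1-1/M,1)$; with this choice the rate constant $\gamma=\max\{b,1-a\}$ of \eqref{gamma} becomes $\max\{\alpha,1-\alpha\}$, matching \eqref{def_gamma}.

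First I would establish \eqref{contraction} pointwise for every $x\ge c$. The cleanest route uses Assumption~\ref{as:3}: since $f$ is decreasing on $[c,\infty)$ and $f(K)=K$, the quantities $x-K$ and $f(x)-K$ have opposite signs for $x\in[c,\infty)\setminus\{K\}$ (and both vanish at $x=K$). Writing $F_\alpha(x)-K=\alpha(x-K)+(1-\alpha)(f(x)-K)$ thus expresses $F_\alpha(x)-K$ as a combination of two opposite-sign terms. Using $|f(x)-K|\le M|x-K|$ from \eqref{eq:Mcond} together with $(1-\alpha)M<1$ (which follows from $\alpha>1-1/M$), one bounds $F_\alpha(x)-K$ above by $\alpha|x-K|$ and below by $-(1-\alpha)|x-K|$, whence $|F_\alpha(x)-K|\le\max\{\alpha,1-\alpha\}\,|x-K|=\gamma|x-K|$. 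Equivalently, this is exactly the case analysis of Lemma~\ref{lem:PBC}(iii) carried out at a single point $x$ rather than along a trajectory, with $a=b=\alpha$.

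The invariance then follows immediately. I would observe that $[c,2K-c]$ is the symmetric interval $\{x:\,|x-K|\le K-c\}$ centred at $K$. For any $x$ in this interval, \eqref{contraction} gives $|F_\alpha(x)-K|\le\gamma(K-c)$, and since $\alpha\in(0,1)$ forces $\gamma=\max\{\alpha,1-\alpha\}<1$, we obtain $|F_\alpha(x)-K|<K-c$, that is $F_\alpha(x)\in(c,2K-c)$.

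Since the result is essentially a restatement of Lemma~\ref{lem:PBC} for constant control, I do not expect a genuine obstacle. The only points requiring care are verifying that the sign opposition of $x-K$ and $f(x)-K$ genuinely relies on Assumption~\ref{as:3} (Assumption~\ref{as:slope} alone does not force $f(x)>K$ for $x$ slightly above $c$), and noting that it is the strict bound $(1-\alpha)M<1$ that upgrades the trivial estimate $|F_\alpha(x)-K|\le|x-K|$ to a genuine contraction with factor $\gamma<1$, so that the closed interval is mapped into its open interior.
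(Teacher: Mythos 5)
Your proof is correct and follows essentially the same route as the paper: both specialise Lemma~\ref{lem:PBC} to constant control (taking $a=b=\alpha$, so $\gamma=\max\{\alpha,1-\alpha\}$), and your pointwise sign-opposition/Lipschitz case analysis is exactly the computation behind Part~(iii) of that lemma. The only (harmless) difference is that the paper obtains the invariance of $[c,2K-c]$ from the non-expansiveness in Part~(i), whereas you deduce it from the strict contraction $\gamma<1$; both are valid, and your version yields the strict inclusion into $(c,2K-c)$ slightly more directly.
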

\begin{proof}
By Lemma~\ref{lem:PBC}, Part (i), 
$$|F_{\alpha}(x)-K| \leq |x-K|, \quad x>0.$$
Thus,  $F_{\alpha}$ maps the interval $[d,2K - d]$ into $(d,2K-d)$
for any $d<K$, in particular, $F_{\alpha}: [c,2K - c]\to (c,2K-c)$.
Next, let $x\geq c$. By Lemma~\ref{lem:PBC},
Part (iii), for $x \in [c,2K - c]$,
$$\left|F_{\alpha} (x) - K\right| \leq \gamma |x-K|,$$
where $\gamma$ defined in \eqref{gamma} takes the form of $\gamma=\max\{ \alpha, 1-\alpha \}$.
\end{proof}

Our next step is prove the existence of an invariant interval under the PBC map when the control parameter $\alpha_n$ is allowed to vary on the interval $[0,1]$. Note first that if Assumptions~\ref{as:slope} and \ref{as:3} hold, the maximum of $f(x)$ on $[0,\infty)$ is attained on $[0,c]$. We construct the endpoints of the invariant interval as follows.
\begin{definition}\label{def:mus}
Under Assumptions~\ref{as:slope} and \ref{as:3}, define
\begin{enumerate}
\item $\mu_0 \in [0,c]$ to be the smallest point where the maximum of $f$ is attained:
\begin{equation}
\label{eq:mu0}
\mu_0 := \inf \left\{ x \in [0,c] \left| f(x)= \max_{s \in [0,\infty)} f(s) \right\} \right. ;
\end{equation}
\item $\mu_2$ to be the value of this maximum:
\begin{equation}
\label{eq:mu2}
\mu_2 := f(\mu_0) \geq f(c)>f(K)=K>c, \quad f(x) \leq \mu_2, \quad x \in [0,\infty);
\end{equation}
\item $\mu_1$ to be the image of $\mu_2$ under $f$: 
\begin{equation}\label{eq:mu1def}
\mu_1:= f(\mu_2).
\end{equation}
\end{enumerate}
\end{definition}
\begin{remark}
By Assumption~\ref{as:3}, $f$ decreases on $[c,\infty)$ and $\mu_2>K>c$, thus   
\begin{equation}
\label{eq:mu1}
\quad \mu_1 =  f(\mu_2) < f(K)=K.
\end{equation}
\end{remark}

\begin{lemma}
\label{lemma_mu}
Suppose that Assumptions~\ref{as:slope} and \ref{as:3} hold, and let $F_{\alpha_n}$ be the PBC map defined in \eqref{eq:F}. For any $\alpha_n \in [0,1]$,
$$F_{\alpha_n}\left([\mu_1,\mu_2]\right)\subseteq [\mu_1,\mu_2].$$
\end{lemma}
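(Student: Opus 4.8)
The plan is to remove the dependence on $\alpha_n$ at a single stroke by observing that $F_{\alpha_n}(x)=\alpha_n x+(1-\alpha_n)f(x)$ is, for $\alpha_n\in[0,1]$, a convex combination of the two points $x$ and $f(x)$. Consequently, if I can show that $f$ itself maps $[\mu_1,\mu_2]$ into $[\mu_1,\mu_2]$, then for every $x\in[\mu_1,\mu_2]$ both $x$ and $f(x)$ lie in $[\mu_1,\mu_2]$, and $F_{\alpha_n}(x)$, being a point on the segment joining them, must also lie in $[\mu_1,\mu_2]$. This reduces the lemma to the purely deterministic claim $f\bigl([\mu_1,\mu_2]\bigr)\subseteq[\mu_1,\mu_2]$.

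The upper bound is immediate: by the definition of $\mu_2$ in \eqref{eq:mu2} as the global maximum of $f$, we have $f(x)\le\mu_2$ for every $x\ge 0$, in particular on $[\mu_1,\mu_2]$. The substance of the argument is the lower bound $f(x)\ge\mu_1$ for $x\in[\mu_1,\mu_2]$, which I would establish by splitting at the point $c$. Since $\mu_1<K$ by \eqref{eq:mu1} and $\mu_2>K>c$, the point $c$ lies strictly below $\mu_2$, so $[c,\mu_2]$ is a nonempty interval containing its right endpoint. On $[c,\mu_2]$, Assumption~\ref{as:3} guarantees that $f$ is monotone decreasing, so its minimum there is attained at $\mu_2$, giving $f(x)\ge f(\mu_2)=\mu_1$ by \eqref{eq:mu1def}. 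On the complementary piece $[\mu_1,\mu_2]\cap[0,c]$ (if nonempty) every $x$ satisfies $x\le c<K$, so Assumption~\ref{as:slope} yields $f(x)>x\ge\mu_1$, where the endpoint $x=0$ (which can occur only when $\mu_1=0$) is covered directly by $f(0)\ge 0=\mu_1$. Combining the two pieces gives $f(x)\ge\mu_1$ throughout $[\mu_1,\mu_2]$, hence $f\bigl([\mu_1,\mu_2]\bigr)\subseteq[\mu_1,\mu_2]$, and the convexity observation then closes the proof for all $\alpha_n\in[0,1]$.

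The only delicate point is organising the lower-bound case analysis cleanly: one must verify $c<\mu_2$ so that the decreasing branch is available and contains $\mu_2$, and one must allow for either position of $\mu_1$ relative to $c$, since if $\mu_1\ge c$ the sub-$c$ piece is empty and the monotonicity argument alone suffices, whereas if $\mu_1<c$ both pieces are needed. This is genuinely the crux, because it is exactly here that the two distinct structural hypotheses on $f$ combine—the inequality $f(x)>x$ below $K$ controls the increasing side, while the monotone decrease on $[c,\infty)$ pins the value at $\mu_2$—and I would take care to state the split so that these two facts cover $[\mu_1,\mu_2]$ without gaps.
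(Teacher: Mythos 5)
Your proof is correct and follows essentially the same route as the paper: reduce the claim to $f\bigl([\mu_1,\mu_2]\bigr)\subseteq[\mu_1,\mu_2]$ and then conclude by noting $F_{\alpha_n}(x)$ is a convex combination of $x$ and $f(x)$, with the upper bound from $\mu_2$ being the global maximum and the lower bound from combining $f(x)>x$ with the monotone decrease of $f$. The only (immaterial) difference is that you split the interval at $c$ while the paper splits at $K$; both splits invoke the same two structural facts and cover $[\mu_1,\mu_2]$ completely.
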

\begin{proof}
First, we prove that $f\left([\mu_1,\mu_2]\right)\subseteq [\mu_1,\mu_2]$.
By Parts (1) and (2) of Definition \ref{def:mus}, we have $f(x) \leq \mu_2$ for any $x \in [\mu_1,\mu_2]$. Since $K\in[\mu_1,\mu_2]$, we consider the subintervals $[\mu_1,K]$ and $[K,\mu_2]$ in turn. 
If $x \in [\mu_1,K]$, then $f(x)>x \geq \mu_1$. If $x \in [K,\mu_2]$, due to the fact that $f(x)$ is decreasing on this interval,
$f(x) \geq f(\mu_2)=\mu_1$. Thus, $f\left([\mu_1,\mu_2]\right)\subseteq [\mu_1,\mu_2]$. Furthermore, for any $x \in [\mu_1,\mu_2]$,
$$
F_{\alpha_n}(x) = \alpha_n x + (1-\alpha_n)f(x)
\geq \alpha_n \mu_1+(1-\alpha_n) \mu_1 = \mu_1,
$$
and
$$
F_{\alpha_n}(x) \leq \alpha_n \mu_2 +(1-\alpha_n) \mu_2
= \mu_2.
$$
We conclude that $F_{\alpha_n}\left([\mu_1,\mu_2]\right)\subseteq [\mu_1,\mu_2]$, as required.
\end{proof}

The final result in this section shows that terms of the sequence $\{\alpha_n\}_{n\in\mathbb{N}_0}$ may be constrained in such a way that solutions of the PBC equation \eqref{eq:Fa} eventually enter, and therefore remain, within the interval $[\mu_1,\mu_2]$. We will use this approach to obtain global stochastic stability conditions later in the article.

\begin{lemma}
\label{lemma_add2}
Suppose that Assumptions~\ref{as:slope} and \ref{as:3} hold and there exists $\delta \in \left(0, \frac{1}{2}\right)$ such that 
\begin{equation}
\label{alphadelta}
\alpha_n \in (\delta,1-\delta)
\end{equation}
for every  $n \in {\mathbb N_0}$. Then, for each $x_0>0$, there exists $n_0 \in {\mathbb N_0}$
such that the solution $x_n$ of \eqref{eq:Fa} satisfies $x_n \in [\mu_1,\mu_2]$ for $n \geq n_0$, where $\mu_1$ and $\mu_2$ are defined in \eqref{eq:mu1def} and \eqref{eq:mu2}, respectively.
\end{lemma}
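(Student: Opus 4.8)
The plan is to exploit the nested structure of the dynamics: by Lemma~\ref{lemma_mu} the interval $[\mu_1,\mu_2]$ is forward invariant for every admissible control value, so it suffices to show that each trajectory reaches $[\mu_1,\mu_2]$ after finitely many steps, after which it can never leave. Recall from \eqref{eq:mu1} and \eqref{eq:mu2} that $\mu_1<K<\mu_2$ and $f(x)\le\mu_2$ for all $x\ge0$. I would organise the argument around two elementary sign observations that follow from Assumption~\ref{as:slope} and the form \eqref{eq:F} of $F_{\alpha_n}$: if $x_n>\mu_2>K$ then $f(x_n)<x_n$, so $x_{n+1}=\alpha_n x_n+(1-\alpha_n)f(x_n)<x_n$; and if $0<x_n<\mu_1<K$ then $f(x_n)>x_n$, so $x_{n+1}>x_n$. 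Thus the trajectory is strictly decreasing while it lies above $\mu_2$ and strictly increasing while it lies below $\mu_1$. Since every $x_n$ is positive (as $\alpha_n\in(0,1)$ and $f>0$ on $(0,\infty)$), these two monotonicity facts are the engine of the proof.

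First I would show that the half-line $(0,\mu_2]$ is entered in finite time and is forward invariant. For invariance, note that if $x_n\in[\mu_1,\mu_2]$ then $x_{n+1}\in[\mu_1,\mu_2]$ by Lemma~\ref{lemma_mu}, while if $x_n\in(0,\mu_1)$ then, using $f(x_n)\le\mu_2$ from \eqref{eq:mu2} together with $x_n<\mu_2$, we get $x_{n+1}=\alpha_n x_n+(1-\alpha_n)f(x_n)<\mu_2$; hence $x_n\le\mu_2$ forces $x_{n+1}\le\mu_2$. To see that $(0,\mu_2]$ is entered, suppose instead that $x_n>\mu_2$ for all $n$. Then $\{x_n\}$ is strictly decreasing and bounded below by $\mu_2$, so it converges to some $L\ge\mu_2>K$. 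Passing to the limit in $x_{n+1}-x_n=(1-\alpha_n)(f(x_n)-x_n)$ and using $1-\alpha_n\ge\delta>0$ together with continuity of $f$ yields $f(L)=L$, forcing $L=K$ by Assumption~\ref{as:slope}, a contradiction. Hence there is $N_1$ with $x_{N_1}\le\mu_2$, and by invariance $x_n\le\mu_2$ for all $n\ge N_1$.

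It then remains to show that, from within $(0,\mu_2]$, the trajectory eventually attains $x_n\ge\mu_1$, after which it lies in $[\mu_1,\mu_2]$ and stays there by Lemma~\ref{lemma_mu}. Here I would argue by contradiction once more: if $x_n<\mu_1$ for all $n\ge N_1$, then by the second monotonicity observation $\{x_n\}_{n\ge N_1}$ is strictly increasing and bounded above by $\mu_1$, hence converges to some $L\le\mu_1<K$; the same limiting argument applied to $x_{n+1}-x_n=(1-\alpha_n)(f(x_n)-x_n)$ again gives $f(L)=L$ and thus $L=K$, contradicting $L\le\mu_1<K$. Therefore $x_{n_0}\ge\mu_1$ for some $n_0\ge N_1$, whence $x_{n_0}\in[\mu_1,\mu_2]$ and $x_n\in[\mu_1,\mu_2]$ for all $n\ge n_0$, which is the assertion of the lemma.

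The step I expect to require the most care is ruling out indefinite oscillation between the regions $\{x>\mu_2\}$ and $\{x<\mu_1\}$: a priori a trajectory could overshoot $\mu_2$ downward into $(0,\mu_1)$ and conceivably climb back. This is precisely what the forward invariance of $(0,\mu_2]$ prevents, since the bound $x_{n+1}<\mu_2$ on $(0,\mu_1)$ shows the trajectory can never return above $\mu_2$ once it has dropped to $\mu_2$ or below; consequently the decreasing phase and the increasing phase each occur at most once, and the two monotone arguments above do not interfere. The only analytic input beyond these algebraic bounds is the limiting argument, whose validity hinges on $1-\alpha_n$ being bounded away from zero; this is where the upper constraint $\alpha_n<1-\delta$ in \eqref{alphadelta} is essential.
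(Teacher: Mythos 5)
Your proposal is correct and follows essentially the same route as the paper: forward invariance of $[\mu_1,\mu_2]$ via Lemma~\ref{lemma_mu}, monotonicity of the trajectory below $\mu_1$ and above $\mu_2$, and a contradiction argument (limit of a monotone bounded trajectory would be a fixed point distinct from $K$, which Assumption~\ref{as:slope} forbids) that relies, exactly as in the paper, on $1-\alpha_n>\delta$. The only cosmetic differences are that you phrase the contradiction by passing to the limit in $x_{n+1}-x_n=(1-\alpha_n)(f(x_n)-x_n)$ where the paper uses an explicit increment bound $\Delta$, and you package the overshoot issue as forward invariance of $(0,\mu_2]$ where the paper ``reverts to the previous case''; both are sound.
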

\begin{proof}
We will show that for $x_0 \leq \mu_1$ (and similarly for $x_0 \geq \mu_2$), there exists
$n_0 \in {\mathbb N_0}$ such that  $x_{n_0} \in [\mu_1,\mu_2]$. It will then follow from Lemma \ref{lemma_mu} that $x_{n_0+j}\in [\mu_1,\mu_2]$ for any $j \in {\mathbb N_0}$.

Proceed by contradiction. Suppose first that $x_0 \leq \mu_1$. Then $f(x_0)>x_0$, and
$$
x_1=\alpha_0 x_0 + (1-\alpha_0)f(x_0) > x_0.
$$
Repeating this step confirms that $\{x_n\}_{n\in\mathbb{N}_0}$ is an increasing sequence as long as $x_n<K$. Thus, either there exists $n_0 \in {\mathbb N_0}$ such that  $x_{n_0} > \mu_1$
or $x_n$ is a bounded increasing sequence which has a limit $d \leq \mu_1<K$. In the latter case, obviously $f(d)> d$.
Denote
$$
\Delta := \frac{\delta}{2}(f(d)-d),
$$
where $\delta$ is as defined in \eqref{alphadelta}.
Here $\Delta>0$, so by continuity of $f$ and convergence of $\{x_n\}_{n\in\mathbb{N}_0}$ to $d$, there is $n_1 \in {\mathbb N_0}$ such that for any $n \geq n_1$, 
$$
f(x_n)-x_n > \frac{\Delta}{\delta}.
$$
Then, for any $n \geq n_1$,
\begin{eqnarray*}
x_{n+1}&=& \alpha_n x_n + (1-\alpha_n)f(x_n)=x_n+(1-\alpha_n)(f(x_n)-x_n)\\
&>& x_n+ \frac{1-\alpha_n}{\delta} \Delta > x_n+\Delta.
\end{eqnarray*}
So $x_{n_1+j} \geq \mu_1$ for any $\displaystyle j \geq n_1+\mu_1/\Delta$, which contradicts our assumption that all $x_n \leq \mu_1$. Moreover, by Definition \ref{def:mus}, $f(x) \leq \mu_2$, so that $x_n \leq \mu_2$ for all $n\in\mathbb{N}_0$. We conclude that there exists $n_0 \in {\mathbb N_0}$ such that  $x_{n_0} \in [\mu_1,\mu_2]$.
 
Suppose next that $x_0>\mu_2>K$. If there is an $n_2 \in {\mathbb N_0}$ such that $x_{n_2} < \mu_2$, then either we revert to the previous case or $x_{n_2} \in [\mu_1,\mu_2]$. Otherwise,
$\{x_n\}_{n\in\mathbb{N}_0}$ is a decreasing sequence with a limit $d \geq \mu_2>K$, where $f(d)<d$; moreover, $f(d)<K$, as $f$ is decreasing on $[c,\infty)$, $c<K$.

As before, by the continuity of $f$ and convergence of $\{x_n\}_{n\in\mathbb{N}_0}$ to $d$, for some $\Delta>0$ there exists $n_2 \in {\mathbb N_0}$ such that 
$$x_n-f(x_n)> \frac{\Delta}{\delta}, \quad n \geq n_2.
$$
Then, for any $n \geq n_2$,
$$
x_{n+1}= \alpha_n x_n + (1-\alpha_n)f(x_n)=x_n-(1-\alpha_n)(x_n-f(x_n)) < x_n - \frac{1-\alpha_n}{\delta} \Delta < x_n-\Delta.
$$ 
So there exists $j\in\mathbb{N}_0$ such that $x_j \leq \mu_2$. We conclude that, for all solutions $\{x_n\}_{n\in\mathbb{N}_0}$ of \eqref{eq:Fa}, there exists $n_0 \in {\mathbb N_0}$ such that $x_n \in [\mu_1,\mu_2]$ for $n \geq n_0$.
\end{proof}


\section{Multiplicative Noise}
\label{sec:3}

Let $(\Omega, {\mathcal{F}}, \{\mathcal{F}_n\}_{n \in \mathbb{N}_0}, {\mathbb{P}})$ be a complete, filtered probability space, and let $\{\xi_n\}_{n\in\mathbb{N}_0}$ be a sequence of independent and identically distributed random variables  with common density function $\phi_n$. The filtration $\{\mathcal{F}_n\}_{n \in\mathbb{N}_0}$ is naturally generated by this sequence: $\mathcal{F}_{n} = \sigma \{\xi_{i} : 1\leq i\leq n\}$, for $n\in\mathbb{N}_0$. Among all sequences $\{x_n\}_{n \in N}$ of random variables we consider those for which $x_n$ is $\mathcal{F}_n$-measurable for all $n \in \mathbb{N}_0$. We use the standard abbreviation ``a.s.'' for the wordings ``almost sure'' or ``almost surely'' with respect to ${\mathbb{P}}$. 

In this section, we allow the control parameter $\alpha$ to vary stochastically, by setting $\alpha_n=\alpha+l\xi_{n+1}$ for each $n\in\mathbb{N}_0$, where $l$ controls the intensity of the perturbation and the sequence $\{\xi_n\}_{n\in\mathbb{N}_0}$ additionally satisfies the following assumption.
\begin{assumption}
\label{as:chi1}
Let  $\{\xi_n\}_{n\in\mathbb{N}_0}$    be a sequence of independent and  identically  distributed  
continuous random variables with common density function $\phi$ supported on the interval $[-1,\nu]$, for some $\nu\geq 1$.
\end{assumption}
\begin{remark}
The support of each $\xi_n$ is asymmetric if $\nu>1$, allowing for perturbations where the potential magnitude in the positive direction is larger than that in the negative direction. Note that the possibility that $\mathbb{E}\xi_n=0$ is not ruled out in that case.
\end{remark}
This leads to the following PBC equation with stochastic control
\begin{equation}
\label{eq:multi}
x_{n+1}= \max \left\{ f(x_n)-\left(\alpha + l\xi_{n+1} \right) (f(x_n)-x_n), 0 \right\}, \quad x_0>0, \quad n\in {\mathbb N}_0.
\end{equation}
The right-hand side is truncated to ensure that physically unrealistic negative population sizes cannot occur.




Our first result in this section applies when the perturbation support is symmetric, and provides a bound on the stochastic intensity $l$ that will ensure the convergence of all solution trajectories to the positive equilibrium $K$. A minimum asymptotic convergence rate is also determined.

\begin{theorem}
\label{th_multi}
Let $\{x_n\}_{n\in\mathbb{N}_0}$ be any solution of equation \eqref{eq:multi} with $x_0>0$. Suppose that Assumptions~\ref{as:slope} and \ref{as:chi1} hold, the latter with $\nu=1$.
If 
\[
\alpha \in \left(1-\frac{1}{M},1\right)
\] 
and 
\begin{equation}
\label{eq:noise}
l < \min\left\{ \alpha -\left(1-\frac{1}{M}\right), \,1-\alpha \right\},
\end{equation}
then for all $\omega\in\Omega$
\begin{equation}\label{eq:xconv}
\lim_{n\to\infty}x_n(\omega)=K,\quad x_0>0.
\end{equation}

If in addition Assumption~\ref{as:3} holds, then there exists a finite random number $n_0(\omega)$ such that for $n\geq n_0(\omega)$, $\{x_n(\omega)\}_{n\in\mathbb{N}_0}$ 
satisfies \eqref{decay} with 
\begin{equation}
\label{eq:beta}
\gamma =  \max\left\{ 1- \alpha + l, \, \alpha+l \right\}
\end{equation}
for all $\omega\in\Omega$.
\end{theorem}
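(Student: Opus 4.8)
The plan is to reduce the stochastic equation \eqref{eq:multi} to the deterministic variable-control equation \eqref{eq:Fa} on a pathwise basis, and then invoke Lemma~\ref{lem:PBC} directly. The crucial observation is that, because $\nu=1$, Assumption~\ref{as:chi1} confines each $\xi_{n+1}$ to $[-1,1]$, so the realised control values
\[
\alpha_n:=\alpha+l\xi_{n+1}
\]
all lie in the fixed deterministic interval $[\alpha-l,\alpha+l]$. The two inequalities in \eqref{eq:noise} are precisely what is needed to place this interval strictly inside $\left(1-\frac1M,1\right)$: the bound $l<\alpha-(1-1/M)$ gives $\alpha-l>1-\frac1M$, while $l<1-\alpha$ gives $\alpha+l<1$. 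Thus $\alpha_n(\omega)\in\left(1-\frac1M,1\right)\subset(0,1)$ for all $n$ and all $\omega$.

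First I would dispose of the truncation. Since $\alpha_n(\omega)\in(0,1)$ and both $x_n\ge0$ and $f(x_n)\ge0$ by Assumption~\ref{as:slope}, the quantity
\[
f(x_n)-\alpha_n(f(x_n)-x_n)=\alpha_n x_n+(1-\alpha_n)f(x_n)=F_{\alpha_n}(x_n)
\]
is a convex combination of nonnegative numbers, hence nonnegative. Therefore the maximum with $0$ in \eqref{eq:multi} is never active, and for each fixed $\omega$ the trajectory $\{x_n(\omega)\}$ solves \eqref{eq:Fa} with the deterministic control sequence $\{\alpha_n(\omega)\}$.

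With this reduction in hand I would apply Lemma~\ref{lem:PBC} pathwise, taking $a=\alpha-l$ and $b=\alpha+l$. By the first paragraph these satisfy $a\in\left(1-\frac1M,1\right)$, together with $\alpha_n(\omega)\in[a,1)$ and $\alpha_n(\omega)\le b<1$, which are exactly the hypotheses of parts (ii) and (iii). Part (ii) then yields $\lim_{n\to\infty}x_n(\omega)=K$ for every $\omega$, giving \eqref{eq:xconv}. Under the additional Assumption~\ref{as:3}, part (iii) yields, for each $\omega$, a finite index $n_0(\omega)$ beyond which $x_n(\omega)\ge c$ and \eqref{decay} holds with $\gamma=\max\{b,1-a\}$; substituting $b=\alpha+l$ and $1-a=1-\alpha+l$ gives exactly \eqref{eq:beta}.

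The one point needing care, rather than a genuine obstacle, is the status of $n_0$. Because the argument is entirely deterministic after conditioning on a path, Lemma~\ref{lem:PBC}(iii) produces a finite $n_0$ for each $\omega$ separately, and this index is path-dependent; hence $n_0=n_0(\omega)$ is a random variable, finite for every $\omega$, as stated. I would also remark that the role of the noise here is purely confining: the intensity bound \eqref{eq:noise} guarantees that the perturbed control never leaves the stabilising window $\left(1-\frac1M,1\right)$, so no probabilistic averaging or martingale estimate is required, and the stochasticity is absorbed entirely into the pathwise application of the deterministic lemma.
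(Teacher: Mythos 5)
Your proposal is correct and follows essentially the same route as the paper: the noise bound \eqref{eq:noise} pins every realised control $\alpha_n(\omega)=\alpha+l\xi_{n+1}(\omega)$ inside $\left(1-\frac{1}{M},1\right)$, after which Lemma~\ref{lem:PBC} is applied pathwise with $a=\alpha-l$, $b=\alpha+l$, yielding \eqref{eq:xconv} and the decay rate $\gamma=\max\{\alpha+l,\,1-\alpha+l\}$. Your explicit verification that the truncation in \eqref{eq:multi} is never active (since $F_{\alpha_n}(x_n)$ is a convex combination of nonnegative quantities) is a detail the paper's proof leaves implicit, and is a welcome addition.
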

\begin{proof}
If \eqref{eq:noise} and Assumptions ~\ref{as:slope},  \ref{as:chi1} ($\nu=1$) hold,  
then for any $\omega \in \Omega$, 
$$\alpha+l \xi_n(\omega) \geq \alpha-l > 1 - \frac{1}{M} \quad \mbox{ and } \quad \alpha+l \xi_n(\omega) \leq \alpha+l <1 .$$
Lemma~\ref{lem:PBC} implies that \eqref{eq:xconv} holds for all $\omega\in\Omega$.

If in addition  Assumption~\ref{as:3} holds, then for all $\omega\in\Omega$, by Lemma~\ref{lem:PBC}, 
there exists $n_0(\omega)\in \mathbb R$ such that, for $n\ge n(\omega)$, we have $x_n(\omega) \geq c$ and 
$$|x_{n+1}(\omega)-K| \leq \gamma |x_n(\omega)-K|,$$
where 
$$\gamma = \max\left\{ 1- (\alpha-l), \alpha+l \right\} =
\max\left\{  1- \alpha + l,\alpha+l \right\},$$
which concludes the proof.
\end{proof}

The Lipschitz-type condition with global constant $M$ given by \eqref{eq:Mcond} in Assumption \ref{as:slope} implies a local Lipschitz-type condition: for any $\varepsilon\in(0,K)$, there exists $M_\varepsilon\leq M$ such that
\begin{equation}
\label{eq:M1cond}
|f(x)-K| \leq  M_\varepsilon |x-K|, \quad x \in (K-\varepsilon,K+\varepsilon).
\end{equation}
In practice, $M_\varepsilon$ can be significantly less than $M$. We use the Ricker map to illustrate this statement.
\begin{example}
\label{ex_Ricker}
Consider the Ricker model given by \eqref{eq:ricker} with $r>2$, so that the positive equilibrium $K=1$ is unstable. We provide lower bounds on $M$ and $M_\varepsilon$ for this model.

Note first that $M$ cannot be less than the magnitude of the slope connecting the point $(K,f(K))=(1,1)$ with the maximum point $(1/r, \exp(r-1)/r)$. This is given by
$$
\frac{e^{r-1} -r}{r-1} \leq M,
$$
where the function in the left-hand side is greater than $r-1$ for $r \geq 2.8$. For instance, $r=5$ leads to the estimates $M>12$.

Now consider $M_\varepsilon$. The derivative $f'(x)=(1-rx)e^{r(1-x)}$ at $x=1$ is $f'(1)=1-r$, so by continuity of the derivatives of \eqref{eq:ricker}, for any $M_\varepsilon>|1-r|=r-1$ there will be some interval $(1-\varepsilon,1+\varepsilon)$ upon which \eqref{eq:M1cond} holds. Again, $r=5$ leads to the estimate $M_\varepsilon>4$.

Let us take $M_\varepsilon=4.5$, then \eqref{eq:M1cond} is satisfied with $\varepsilon =0.6$ 
\begin{equation}
\label{Ricker_cond}
|f(x)-1| \leq  4.5 |x-1|, \quad x \in (0.94, 1.06).
\end{equation}
In fact, the right endpoint of the interval upon which this inequality holds can be chosen to be arbitrarily large, since $|f(x)-1| < 4(x-1)$ for $x>1$. 
\end{example}
\begin{remark}
The ability to choose $M_\varepsilon<M$ will help us to identify where stochastic perturbations may act to stabilise the positive equilibirum $K$.
\end{remark}

The following Borel-Cantelli lemma (see, for example, \cite[Chapter 2.10]{Shir}) will be used in the proof of Lemma \ref{cor:barprob} .

\begin{lemma}
\label{lem:BC}
Let $A_1, \dots, A_n, \dots$ be a sequence of independent events. 
If $\displaystyle \sum_{i=1}^\infty \mathbb P\{A_i\}=\infty$, then $\mathbb P\{A_n \,\,  \text{occurs infinitely often}\}=1.$
\end{lemma}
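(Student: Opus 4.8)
The plan is to prove the divergence half of the Borel--Cantelli lemma by showing that the \emph{complementary} event carries probability zero. First I would rewrite the event that $A_n$ occurs infinitely often in its standard form
\[
\{A_n \text{ occurs infinitely often}\} = \limsup_{n\to\infty} A_n = \bigcap_{n=1}^\infty \bigcup_{k \geq n} A_k,
\]
so that its complement is $\bigcup_{n=1}^\infty \bigcap_{k \geq n} A_k^c$. Since a countable union of null sets is null, it suffices to prove that $\mathbb{P}\left(\bigcap_{k \geq n} A_k^c\right) = 0$ for every fixed index $n$.

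The next step is to exploit independence. For fixed $n$ and any $m>n$, independence of the events $A_k$ (and hence of their complements) gives
\[
\mathbb{P}\left( \bigcap_{k=n}^m A_k^c \right) = \prod_{k=n}^m \left( 1 - \mathbb{P}(A_k) \right).
\]
The key analytic estimate is the elementary inequality $1 - x \leq e^{-x}$, valid for all real $x$, which yields
\[
\prod_{k=n}^m \left( 1 - \mathbb{P}(A_k) \right) \leq \exp\left( -\sum_{k=n}^m \mathbb{P}(A_k) \right).
\]
Because $\sum_{i=1}^\infty \mathbb{P}(A_i) = \infty$, the partial sums $\sum_{k=n}^m \mathbb{P}(A_k)$ diverge to $+\infty$ as $m \to \infty$, so the right-hand side tends to $0$.

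Finally I would pass to the limit using continuity of probability from above: since $\bigcap_{k=n}^m A_k^c$ decreases to $\bigcap_{k \geq n} A_k^c$ as $m \to \infty$, we obtain $\mathbb{P}\left(\bigcap_{k \geq n} A_k^c\right) = 0$. Taking the countable union over $n$ then gives $\mathbb{P}\left(\bigcup_{n} \bigcap_{k\geq n} A_k^c\right) = 0$, and passing to complements yields $\mathbb{P}\{A_n \text{ occurs infinitely often}\} = 1$, as claimed.

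There is no serious obstacle here, as the result is classical; the proof is a short chain of reductions. The only points requiring care are that the independence of the $A_k$ must first be transferred to their complements before multiplying probabilities, and that the passage from the finite intersection to the infinite one relies on continuity of the measure from above rather than on any interchange of a limit with an infinite product. The inequality $1 - x \leq e^{-x}$ does the essential work of converting the divergent probability series into a vanishing product.
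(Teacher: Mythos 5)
Your proof is correct and complete: it is exactly the standard argument for the divergence half of the Borel--Cantelli lemma (the De Morgan decomposition of $\limsup_n A_n$, independence transferred to complements, the bound $1-x \le e^{-x}$, and continuity of the measure from above). Note that the paper offers no proof of this lemma at all---it is stated as a classical result with a citation to Shiryaev's textbook---so your write-up simply supplies the standard textbook proof, and nothing is missing.
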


\begin{lemma}
\label{cor:barprob}
Let $\xi_1, \dots , \xi_n, \dots$ be a sequence of independent identically distributed random variables such that $\mathbb P \left\{\xi_n\in (a,b)\right\}=\tau\in (0, 1)$ for some interval $(a,b)$, $a<b$, and each $n\in\mathbb{N}_0$. Let $n_0$ be an a.s. finite random number. Then for each  $j \in {\mathbb N_0}$, 
\begin{multline*}
\mathbb{P}\left[\text{There exists } \mathcal N=\mathcal N(j)\in (n_0, \infty)\right.\\
\left.\text{ such that (s.t.) } \xi_{\mathcal N}\in (a,b), \, \xi_{\mathcal N+1}\in (a,b), \dots, \xi_{\mathcal N+j}\in (a,b)\right]=1.
\end{multline*}
\end{lemma}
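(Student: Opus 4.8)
The plan is to reduce the statement to the second Borel--Cantelli lemma (Lemma~\ref{lem:BC}) by exhibiting infinitely many \emph{independent} events, each representing a run of $j+1$ consecutive values landing in $(a,b)$; the key is to group the indices into disjoint blocks so that independence is preserved. Fixing $j\in\mathbb{N}_0$, I would partition $\mathbb{N}_0$ into consecutive non-overlapping blocks of length $j+1$ and, for $k\in\mathbb{N}_0$, define
\[
A_k := \left\{ \xi_{k(j+1)+1}\in(a,b),\ \dots,\ \xi_{(k+1)(j+1)}\in(a,b)\right\}.
\]
Because distinct blocks involve disjoint sets of indices and the $\xi_n$ are independent, the events $\{A_k\}_{k\in\mathbb{N}_0}$ are mutually independent, and by independence within a block together with $\mathbb{P}\{\xi_n\in(a,b)\}=\tau$ we obtain $\mathbb{P}(A_k)=\tau^{j+1}>0$, since $\tau\in(0,1)$ and $j+1$ is finite.

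Next I would observe that $\sum_{k=0}^\infty \mathbb{P}(A_k)=\sum_{k=0}^\infty \tau^{j+1}=\infty$, so applying Lemma~\ref{lem:BC} to the independent sequence $\{A_k\}$ yields
\[
\mathbb{P}\{A_k \text{ occurs infinitely often}\}=1.
\]
On this event of full probability, infinitely many blocks produce a run of $j+1$ consecutive successes.

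To incorporate the a.s. finite random number $n_0$, I would argue pointwise on the full-probability event just obtained. Since $n_0(\omega)<\infty$ and infinitely many of the $A_k$ occur, one may select $k$ large enough that $k(j+1)+1>n_0(\omega)$ and $A_k$ holds; setting $\mathcal{N}(j):=k(j+1)+1$ then gives an index with $\xi_{\mathcal{N}},\dots,\xi_{\mathcal{N}+j}\in(a,b)$ and $\mathcal{N}>n_0$. As this holds on a set of probability one, the stated probability equals $1$.

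The main obstacle is the correct construction of the independent events: the naive choice of overlapping windows $\{\xi_n,\dots,\xi_{n+j}\}$ fails because consecutive windows share variables and are therefore not independent, so Lemma~\ref{lem:BC} cannot be applied directly. Partitioning into disjoint blocks resolves this, at the harmless cost of only guaranteeing runs aligned to block boundaries. A secondary subtlety is that $n_0$ may be correlated with the $\xi_n$; this is handled by noting that the infinitely-often event has probability one irrespective of $n_0$, so the pointwise selection argument goes through for almost every $\omega$.
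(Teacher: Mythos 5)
Your proposal is correct and follows essentially the same route as the paper: partition the indices into disjoint blocks of consecutive variables, note the resulting events are independent with constant positive probability $\tau^{j+1}$, apply the second Borel--Cantelli lemma (Lemma~\ref{lem:BC}), and then select a block beyond $n_0(\omega)$ pointwise on the resulting a.s. event. If anything, your block length of $j+1$ tracks the required run $\xi_{\mathcal N},\dots,\xi_{\mathcal N+j}$ slightly more carefully than the paper's blocks of length $j$.
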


\begin{proof}
Denote
\begin{eqnarray*}
B_1&:=&\{\xi_{1}\in (a,b), \, \xi_{2}\in (a,b), \dots, \xi_{j}\in (a,b)\}, \\
B_2&:=&\{\xi_{j+1}\in (a,b), \xi_{j+2}\in (a,b), \dots, \xi_{2j}\in (a,b)\},\\
&\vdots&\\
B_i&:=&\{\xi_{(i-1)j+1}\in (a,b), \, \xi_{(i-1)j+2}\in (a,b), \dots, \xi_{ij}\in (a,b)\}, \quad \text{for each} \quad i,j\in \mathbb N_0.
\end{eqnarray*}
The events in the sequence $\{B_n\}_{n\in\mathbb{N}_0}$ are mutually independent, since terms of the  sequence $\{\xi_n\}_{n\in\mathbb{N}_0}$ are mutually independent, and each $\xi_i$ appears in one and only one event $\{B_j\}_{n\in\mathbb{N}_0}$. Moreover, we have
\[
\mathbb P\{B_i\}=\tau^j
\]
and therefore
\[
\sum_{i=1}^\infty\mathbb{P}[B_i]=\infty.
\]
The Borel-Cantelli Lemma, Lemma~\ref{lem:BC}, yields
\[
\mathbb P[B_n \,\,  \text{occurs infinitely often}]=1.
\]
Denoting 
\[
\mathcal N(\omega)=\min\left\{ n\ge n_0(\omega): B_n(\omega) \,\, \text{occurs} \right\}
\]
we complete the proof.

\end{proof}

\begin{lemma}
\label{lemma_add1}
Suppose that Assumption~\ref{as:slope} holds, $M_\varepsilon>1$ and $\varepsilon \in (0,K)$ are constants for which \eqref{eq:M1cond} is valid and the unperturbed control parameter satisfies
\[
\alpha\in \left(1-\frac 1{M_\varepsilon}, \, 1\right).
\]
Suppose also that Assumption \ref{as:chi1} holds and further that the perturbation intensity satisfies \begin{equation}
\label{for_cond2}
l<\min\left\{\alpha-\left(1-\frac{1}{M_\varepsilon}\right), \,  \frac{1-\alpha}{\nu}  \right\}.
\end{equation}

Let $\{x_n\}_{n\in\mathbb{N}_0}$ be any solution of equation \eqref{eq:multi} with $x_0>0$. The following is true for all $\omega\in\Omega$:  if $x_j(\omega) \in (K-\varepsilon,K+\varepsilon)$ for some $j \in {\mathbb N_0}$, then $x_n(\omega) \in (K-\varepsilon,K+\varepsilon)$ for any $n \geq j$ and 
\[
\lim_{n \to \infty} x_n(\omega)=K.
\]
\end{lemma}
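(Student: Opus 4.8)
The plan is to reduce the statement to a pathwise analysis of the map $F_{\alpha_n}$, where $\alpha_n := \alpha + l\xi_{n+1}$, exactly as in the proof of Theorem~\ref{th_multi}, but working locally with $M_\varepsilon$ in place of $M$ and retaining the asymmetric support. First I would observe that the two bounds in \eqref{for_cond2} pin the effective control parameter inside the admissible window: since $\xi_{n+1}\in[-1,\nu]$, the condition $l<\alpha-(1-1/M_\varepsilon)$ gives $\alpha_n\geq \alpha-l>1-1/M_\varepsilon$, while $l<(1-\alpha)/\nu$ gives $\alpha_n\leq \alpha+l\nu<1$; hence $\alpha_n\in(1-1/M_\varepsilon,1)$ for every $n$ and every $\omega$. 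This is where the asymmetric support matters, as the factor $\nu$ enters only the upper bound. In particular $(1-\alpha_n)M_\varepsilon<1$ throughout.

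Next I would prove that $(K-\varepsilon,K+\varepsilon)$ is invariant. Assuming $x_n\in(K-\varepsilon,K+\varepsilon)$, I would establish the single-step bound $|F_{\alpha_n}(x_n)-K|\leq|x_n-K|$ by the same case split as in Lemma~\ref{lem:PBC}(i), now using the local Lipschitz bound \eqref{eq:M1cond}, which is legitimate precisely because $x_n$ lies in $(K-\varepsilon,K+\varepsilon)$. If $x_n$ and $F_{\alpha_n}(x_n)$ lie on the same side of $K$, then $F_{\alpha_n}(x_n)$ lies strictly between $x_n$ and $K$, because $F_{\alpha_n}(x_n)-x_n=(1-\alpha_n)(f(x_n)-x_n)$ has the sign of $K-x_n$; so the distance strictly decreases with no use of the Lipschitz constant. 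If a switch occurs, say $x_n<K<F_{\alpha_n}(x_n)$, then necessarily $f(x_n)>K$, and $(1-\alpha_n)M_\varepsilon<1$ yields $|F_{\alpha_n}(x_n)-K|\leq[(1-\alpha_n)M_\varepsilon-\alpha_n]|x_n-K|\leq(1-\alpha+l)|x_n-K|$; the symmetric computation covers $x_n>K>F_{\alpha_n}(x_n)$. In all cases $|F_{\alpha_n}(x_n)-K|<\varepsilon$, so $F_{\alpha_n}(x_n)>K-\varepsilon>0$ and the truncation in \eqref{eq:multi} is inactive, giving $x_{n+1}=F_{\alpha_n}(x_n)\in(K-\varepsilon,K+\varepsilon)$. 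Invariance follows by induction from the hypothesis $x_j\in(K-\varepsilon,K+\varepsilon)$.

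For convergence I would use that invariance makes $\{|x_n-K|\}_{n\geq j}$ non-increasing, hence convergent to some $L\geq 0$, and argue $L=0$ by a dichotomy. If the sign of $x_n-K$ is eventually constant, then $\{x_n\}$ is eventually monotone and bounded, with some limit $\ell$; passing to the limit in $x_{n+1}-x_n=(1-\alpha_n)(f(x_n)-x_n)\to 0$ and using $1-\alpha_n\geq 1-\alpha-l\nu>0$ together with the continuity of $f$ forces $f(\ell)=\ell$, so $\ell=K$ and $L=0$. If instead the sign changes infinitely often, then the switch estimate above applies along the sequence $\{n_k\}$ of switch times, giving $|x_{n_k+1}-K|\leq(1-\alpha+l)|x_{n_k}-K|$; letting $k\to\infty$ yields $L\leq(1-\alpha+l)L$, and since $1-\alpha+l<1$ (because $l<\alpha$) this forces $L=0$. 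Either way $\lim_{n\to\infty}x_n=K$.

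The main obstacle I anticipate is the convergence step rather than invariance: because Assumption~\ref{as:3} is not assumed here, there is no uniform per-step contraction rate, since same-side steps may contract arbitrarily slowly as $x_n\to K$, so I cannot simply iterate a geometric bound as in Theorem~\ref{th_multi}. The fix is to combine the monotonicity of $|x_n-K|$ with the dichotomy above, extracting a genuine contraction only at the switches, where the uniform factor $1-\alpha+l<1$ guaranteed by the lower bound on $\alpha_n$ is available, and otherwise falling back on the telescoping increment identity. Some care is also needed to verify that the truncation to $0$ in \eqref{eq:multi} never activates once the trajectory is inside the neighbourhood, which follows from $\varepsilon<K$.
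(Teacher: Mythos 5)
Your proposal is correct and follows essentially the same route as the paper: the same pathwise reduction to $\alpha_n\in(1-1/M_\varepsilon,1)$ via \eqref{for_cond2}, the same invariance argument splitting into same-side steps (where the distance to $K$ decreases without the Lipschitz bound) and switches (where $(1-\alpha_n)M_\varepsilon<1$ yields the factor $1-\alpha+l<1$), and then monotonicity of $\{|x_n-K|\}$ to conclude convergence. The only difference is that you spell out the final ``the limit can only be zero'' step (dichotomy between eventually one-sided trajectories, handled via $x_{n+1}-x_n=(1-\alpha_n)(f(x_n)-x_n)\to 0$, and infinitely many switches, handled by the uniform contraction factor), which the paper leaves terse by implicitly reusing the argument of Lemma~\ref{lem:PBC}(ii); you also check explicitly that the truncation in \eqref{eq:multi} never activates, a detail the paper omits.
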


\begin{proof}
Denote  $\alpha_n(\omega)=\alpha + l \xi_{n+1}(\omega)$ for any $\omega\in\Omega$. We fix and 
notationally suppress the trajectory $\omega$ for the remainder of this proof. From \eqref{for_cond2} and Assumption \ref{as:chi1} we have
\begin{equation}\label{eq:1map}
0<\alpha-l<\alpha_n\le \alpha+ \nu l<1,
\end{equation}
and
\[
0<1-\alpha-\nu l<1-\alpha_n<1-\alpha+l<1.
\]
Moreover,
\begin{equation}\label{eq:MeBound}
(1-\alpha_n) M_\varepsilon<(1-\alpha+l)M_\varepsilon<1.
\end{equation}

Let $x_j \in (K-\varepsilon,K+\varepsilon)$ for some $j \in {\mathbb N_0}$, and suppose that $x_j,x_{j+1}\neq K$. There are two possibilities: either $(x_{j+1}-K)(x_j-K)>0$ or $(x_{j+1}-K)(x_j-K)<0$.

If $(x_{j+1}-K)(x_j-K)>0$ then, as in the proof of Lemma~\ref{lem:PBC}, $|x_{j+1}-K|<|x_j-K|$
and thus $x_{j+1} \in (K-\varepsilon,K+\varepsilon)$.

If instead, $(x_{j+1}-K)(x_j-K)<0$, then we must consider two further sub-cases. Suppose first that $x_j \in (K-\varepsilon, K)$, then $f(x_j)>K$, so that $F_{\alpha_j}(x_j) > K$, and $x_{j+1} \in (K,f(x_j))$. 
Therefore, by \eqref{eq:F}, \eqref{eq:M1cond} and \eqref{eq:MeBound},
\begin{eqnarray*} 
\left| x_{j+1}-K \right| & = &  F_{\alpha_j}(x_j)-K  \\
& = &   (1-\alpha_j) (f(x_j)- K) - \alpha_j (K-x_j)  \\
& \leq &  (1-\alpha_j) M_\varepsilon(K-x_j) - \alpha_j (K-x_j) \\
& \leq &  |x_j-K| - \alpha_j |x_j-K|\\ &\leq&  (1-\alpha + l) \left| x_{j}-K \right|.
\end{eqnarray*}
It follows that $x_{j+1} \in (K-\varepsilon,K+\varepsilon)$. 
Next, when $x_j \in (K,K+\varepsilon)$ we have $f(x_j) < K$, so that $F_{\alpha_j}(x_j) < K$  and $x_{j+1} \in (f(x_j),K)$. Again this yields
\begin{eqnarray*} 
\left| x_{j+1}-K \right| & = & K -  F_{\alpha_j}(x_{j}) \\ & =  &  
 (1-\alpha_j) (K-f(x_j)) - \alpha_j (x_j-K)  \\
& \leq &  (1-\alpha_j) M_\varepsilon(x_j-K) - \alpha_j (x_j-K) \\
& \leq &  \left| x_{j}-K \right| - \alpha_j \left| x_{j}-K \right| \\ & \leq &  (1-\alpha+l) \left| x_{j}-K \right|,
\end{eqnarray*}   
where $0<1-\alpha+l<1$.
Thus $x_{j+1} \in (K-\varepsilon,K+\varepsilon)$, and by induction, all $x_i \in (K-\varepsilon,K+\varepsilon)$, $i \geq j$. We also conclude that the sequence $\{| x_{n}-K |\}_{n\geq j}$ is non-negative 
and monotone non-increasing and therefore has a limit which can only be zero. The result follows.
\end{proof}

\begin{lemma}
\label{lemma_add3}
Suppose that Assumptions~\ref{as:slope}, \ref{as:3} and condition \eqref{eq:M1cond}, with $\varepsilon \in (0,K)$ and $M_\varepsilon \in (1,M)$, hold. Suppose further that Assumption~\ref{as:chi1} is satisfied, and
\begin{equation}
\label{eq_prob}
a:= \alpha - l > 1-\frac{1}{M_\varepsilon}, \quad 1-\frac{1}{M} < b:= \alpha + l \nu <1.
\end{equation}

Let $\{x_n\}_{n\in\mathbb{N}_0}$ be any solution of equation \eqref{eq:multi} with $x_0>0$. Then
\[
\lim_{n \to \infty} x_n = K,\quad a.s.
\]
\end{lemma}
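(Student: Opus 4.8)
The plan is to combine the global "funneling" result from Section 2 (Lemma \ref{lemma_add2}), which drives every trajectory into the invariant interval $[\mu_1,\mu_2]$, with the local trapping result Lemma \ref{lemma_add1}, which says that once a trajectory enters $(K-\varepsilon,K+\varepsilon)$ it stays there and converges to $K$. The only gap between these two results is getting from $[\mu_1,\mu_2]$ into the narrow window $(K-\varepsilon,K+\varepsilon)$, and this is precisely where the noise — via the Borel--Cantelli machinery of Lemma \ref{cor:barprob} — does the work. So the argument is a three-stage relay: global capture (deterministic, a.s.), a probabilistic "hitting" step, and local absorption.

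\textbf{First}, I would verify that the hypotheses of Lemma \ref{lemma_add2} are met under \eqref{eq_prob}. Since $\alpha_n(\omega)=\alpha+l\xi_{n+1}(\omega)\in[a,b]=[\alpha-l,\alpha+l\nu]$ and \eqref{eq_prob} gives $a>1-1/M_\varepsilon>0$ and $b<1$, we can find $\delta\in(0,1/2)$ with $\alpha_n\in(\delta,1-\delta)$ for all $n$ and all $\omega$. Hence Lemma \ref{lemma_add2} applies pathwise: for every $\omega$ there is a finite $n_0(\omega)$ with $x_n(\omega)\in[\mu_1,\mu_2]$ for all $n\ge n_0(\omega)$. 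This random time $n_0$ is a.s.\ finite, which is exactly the hypothesis needed to invoke Lemma \ref{cor:barprob}.

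\textbf{Second}, the heart of the proof is the hitting step. Once inside $[\mu_1,\mu_2]$, I want to show that a sufficiently long run of realizations $\xi_n$ landing in a favourable sub-interval forces the trajectory into $(K-\varepsilon,K+\varepsilon)$. On $[\mu_1,\mu_2]$ the map $F_{\alpha_n}$ contracts toward $K$ whenever $\alpha_n$ lies in a range making $\max\{\alpha_n,1-\alpha_n\}$ bounded by some $\gamma<1$ (by the estimates in Corollary \ref{new_add1} and Lemma \ref{lem:PBC}(iii), which apply on $[c,\infty)\supseteq[\mu_1,\mu_2]$). Choose a small interval $(a',b')\subseteq[-1,\nu]$ on which $\xi_n$ puts positive probability $\tau$ and such that $\alpha+l\xi\in(a',b')$ forces the per-step contraction factor below some fixed $\gamma_0<1$. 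A single such step multiplies $|x_n-K|$ by at most $\gamma_0$; the diameter of $[\mu_1,\mu_2]$ is finite, so a deterministic number $j$ of consecutive contracting steps suffices to bring $|x_n-K|$ below $\varepsilon$ from any starting point in the interval. Lemma \ref{cor:barprob}, applied with this $(a',b')$ and the a.s.\ finite time $n_0$, then guarantees with probability one the existence of a random $\mathcal N>n_0$ such that $\xi_{\mathcal N},\dots,\xi_{\mathcal N+j}$ all lie in $(a',b')$; invariance of $[\mu_1,\mu_2]$ (Lemma \ref{lemma_mu}) keeps the trajectory inside during this run, so by step $\mathcal N+j+1$ we have $x_{\mathcal N+j+1}\in(K-\varepsilon,K+\varepsilon)$.

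\textbf{Third}, once $x_m\in(K-\varepsilon,K+\varepsilon)$ for some (random, a.s.\ finite) index $m$, Lemma \ref{lemma_add1} — whose hypotheses \eqref{for_cond2} and $\alpha\in(1-1/M_\varepsilon,1)$ follow from \eqref{eq_prob} after checking $l<\alpha-(1-1/M_\varepsilon)$ and $l\le l\nu<1-\alpha$, noting $b<1$ gives $l\nu<1-\alpha$ — guarantees pathwise that the trajectory remains in the window and $x_n\to K$. Assembling these, the event $\{x_n\to K\}$ contains the a.s.\ event from Lemma \ref{cor:barprob}, so $\lim_{n\to\infty}x_n=K$ a.s. \textbf{The main obstacle} I anticipate is the bookkeeping in the second stage: I must exhibit a single favourable sub-interval $(a',b')$ of positive probability on which the contraction is uniform, determine the fixed number $j$ of steps needed to cross from the worst-case point of $[\mu_1,\mu_2]$ into the $\varepsilon$-window, and confirm that the inequalities in \eqref{eq_prob} are simultaneously compatible with both the global requirement ($b>1-1/M$, $a>0$) and the local requirement ($a>1-1/M_\varepsilon$ with the sharper constant $M_\varepsilon<M$) — it is exactly the gap $M_\varepsilon<M$ that creates room for the noise to do something the deterministic control cannot.
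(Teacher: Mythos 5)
Your overall architecture --- funnel into $[\mu_1,\mu_2]$ via Lemma \ref{lemma_add2}, use Lemma \ref{cor:barprob} to produce a long favourable run of noise values, contract into $(K-\varepsilon,K+\varepsilon)$, and finish with Lemma \ref{lemma_add1} --- is exactly the paper's, and your verification of the hypotheses of Lemmas \ref{lemma_add2} and \ref{lemma_add1} from \eqref{eq_prob} is correct. But your second stage contains a genuine gap: the inclusion $[c,\infty)\supseteq[\mu_1,\mu_2]$, which you invoke so that Corollary \ref{new_add1} and Lemma \ref{lem:PBC}(iii) give a uniform contraction on all of $[\mu_1,\mu_2]$, is false in general. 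By Definition \ref{def:mus}, $\mu_1=f(\mu_2)$ is the image of the maximum value of $f$, and for the maps this paper targets it lies far below $c$: for the Ricker map with $r=5$ one has $c=1/5$, $\mu_2=f(1/5)=e^4/5\approx 10.9$, and $\mu_1=f(\mu_2)\approx 10.9\,e^{-49.6}$, essentially $0$. On $[\mu_1,c)$ Assumption \ref{as:3} provides no monotonicity, $f(x)$ may lie below $K$, and the cited results give only the non-increase $|x_{n+1}-K|<|x_n-K|$ (Lemma \ref{lem:PBC}(i)), not a per-step factor $\gamma_0<1$. Consequently your claim that a fixed number $j$ of favourable steps carries \emph{any} starting point of $[\mu_1,\mu_2]$ into the $\varepsilon$-window is unjustified precisely on $[\mu_1,c)$.

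This is the reason the paper's proof contains a phase you omitted: on $[\mu_1,c]$ it uses the drift estimate $f(x)-x\geq d_1>0$ from \eqref{d1}, so that each step gains at least $(1-\alpha-l\nu)d_1$ and the trajectory enters $[c,\mu_2]$ within the deterministic number $N_1$ of steps given by \eqref{def:N1}; only then does the geometric contraction run of length $N_2$ apply, which is why the Borel--Cantelli event must supply $N_1+N_2$ consecutive favourable values, not merely the contraction run. Your argument can be repaired either by inserting this drift phase and lengthening the run accordingly, or by proving a separate uniform contraction on $[\mu_1,c)$ (the compactness of $[\mu_1,c]$ and $d_1$ yield a factor of the form $1-(1-\alpha-l\nu)d_1/(K-\mu_1)$ there), but some such additional ingredient is indispensable. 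A lesser point: your criterion for a favourable step, that $\max\{\alpha_n,1-\alpha_n\}$ be bounded by some $\gamma<1$, is vacuous --- it holds for every $\alpha_n\in(0,1)$. What the contraction in Lemma \ref{lem:PBC} actually requires is $\alpha_n>1-1/M$, so that switching steps contract through the global Lipschitz constant $M$; this is why the paper's favourable window is $\alpha_k\in(a_1,\alpha+l\nu)$ with $a_1>1-1/M$, a choice made possible exactly because \eqref{eq_prob} forces $b=\alpha+l\nu>1-1/M$. You do acknowledge this requirement on $b$ at the end, but the per-step condition in your hitting argument should be stated in that form.
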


\begin{proof}
By Lemma~\ref{lemma_add1}, it suffices to prove that 
\[
\mathbb{P}\left[\text{There exists }\mathcal N_1<\infty\text{ s.t. }x_{\mathcal N_1} \in (K - \varepsilon, K+ \varepsilon)\right]=1.
\]
Denote as usual $\alpha_n = \alpha + l \xi_{n+1}$ and fix $\varepsilon>0$. Let $N_1$ and $N_2$ be nonrandom positive integers defined by \eqref{def:N1} and \eqref{def:N2} respectively. Our proof is in three parts.
In Part (i) we show that each trajectory $x_n(\omega)$ reaches $[\mu_1, \mu_2]$  in an $\omega$-dependent number of steps and stays there forever. 
In Part (ii) we prove that each trajectory $x_n(\omega)$ then enters $[c, \mu_2]$ in fewer than $N_1$ steps. 
In Part (iii) we verify that each trajectory then enters $[K-\varepsilon, K+\varepsilon]$ in fewer than $N_2$ 
steps. 
\begin{itemize}
\item[Part (i):] Let $c$ be the constant associated with the map $f$ in Assumption \ref{as:3}, and let $\mu_1$ and $\mu_2$ be as defined by \eqref{eq:mu1def} and \eqref{eq:mu2}  in Definition \ref{def:mus}.
 Since $f(x) >x$ for any $x \in [\mu_1,c]$ and $f$ is continuous, we can introduce
\begin{equation}
\label{d1}
d_1 := \min_{x \in [\mu_1, c]} (f(x)-x) >0,
\end{equation}
and
\begin{equation}
\label{def:N1}
N_1 := \left[ \frac{c-\mu_1}{(1-\alpha - \nu l)d_1} \right] +1,
\end{equation}
where $[t]$ is an integer part of $t$.   Next, we denote $a$ as in \eqref{eq_prob} and fix some $a_1$ satisfying
\begin{equation}
\label{def:a}
a_1 \in \left( 1 - \frac{1}{M}, \,\alpha+l\nu \right).
\end{equation}
Denote  
\begin{equation}
\label{gam1}
\gamma = \max\{ b, 1-a\}= \max\{ \alpha + l \nu, 1 - \alpha + l \},
\end{equation}
where $b$ was defined in \eqref{eq_prob} and from which it follows that $\gamma\in (0, 1)$. Additionally denote
\begin{equation}
\label{def:N2}
N_2 := \left[ \left. \ln \left( \max \left\{ \frac{K-c}{\varepsilon}, \frac{\mu_2 - K}{\varepsilon},1 \right\} \right) 
\right/ (-\ln \gamma) \right] + 2.  
\end{equation} 

From \eqref{eq_prob}, we have
\begin{equation}
\label{est:alphan}
\alpha_n \in (a,b) = (\alpha-l, \alpha+l\nu)\subset (\delta, 1-\delta), \quad  1-\alpha_n \geq	 1-\alpha-l\nu,
\end{equation}
where $\delta>0$ can be chosen, for example, to satisfy the inequality
$$
\delta <\min \left\{ a,1-b \right\}.
$$
So the conditions of Lemma~\ref{lemma_add2} are satisfied and we can deduce that for each trajectory $\omega\in\Omega$, there is a finite $n_0(\omega)$ such that $x_{j}(\omega) \in [\mu_1,\mu_2]$ for $j \geq n_0(\omega)$.

Corollary \ref{cor:barprob} implies that,  given any constant integers $N_1$ and $N_2$ defined by \eqref{def:N1} 
and \eqref{def:N2} respectively, the finite random number $n_0$, and the interval 
$\left(\frac {a_1-\alpha}l, \nu\right)$, where $a_1$ is defined by \eqref{def:a},
\begin{multline}\label{def:mathcalN}
\mathbb{P}\left[\text{There exists }\mathcal{N}\in (n_0, \infty)\text{ s.t. }\right.\\ \left.\xi_k\in \left(\frac {a_1-\alpha}l, \nu\right),\text{ for all } k=\mathcal N+1,  \dots, 
\mathcal N+N_1+N_2,
\right]=1.
\end{multline}
Since $a_1<\alpha+l\nu$, we have
\[
\left(\frac {a_1-\alpha}l, \nu\right)\cap (-1, \nu)\neq \emptyset, 
\]
so that
\[
\mathbb P\left\{\xi_k\in \left(\frac 
{a_1-\alpha}l, \nu\right)\right\}>0,\quad \text{for all}\quad k\in\mathbb{N}_0.
\]
Also, on a given trajectory $\omega\in\Omega$, 
\begin{equation}
\label{cond:alphak}
\xi_k(\omega)\in \left(\frac {a_1-\alpha}l, \nu\right)\quad \Rightarrow\quad\alpha_k(\omega)\in (a_1, \alpha+l\nu).
\end{equation}

\item[Part (ii):] From Assumption \ref{as:slope} and \eqref{est:alphan}, note that for any fixed trajectory $\omega\in\Omega$, as long as $x_n(\omega) \in [\mu_1,c]$, we have
\begin{eqnarray*}
x_{n+1}(\omega) & = & \alpha_n(\omega) x_n(\omega) +(1-\alpha_n(\omega))f(x_n(\omega))\\
&=&x_n(\omega)+(1-\alpha_n(\omega))(f(x_n(\omega))-x_n(\omega)) \\ 
& \geq & x_n(\omega) + (1-\alpha_n(\omega)) d_1\\
& \geq &x_n(\omega)+ (1-\alpha -l\nu) d_1,
\end{eqnarray*}
and hence at least one of $x_{n+1}(\omega), \dots, x_{n+N_1}(\omega)$ is in $[c,\mu_2]$.

\item[Part (iii):] For any fixed trajectory $\omega\in\Omega$, if $x_n(\omega) \in [c,\mu_2]$ and $N_2$ successive terms of the subsequence
$\{\alpha_k(\omega)\}_{k=n}^{\infty}$ satisfy 
\begin{equation}
\label{big_alpha}
\alpha_{k-1}(\omega)=\alpha+l \xi_{k}(\omega) \in (a_1, \alpha + l\nu), \quad k=n+1, n+2, \dots, n+N_2,
\end{equation}
we have $x_{n+N_2+1} \in (K-\varepsilon, K+\varepsilon)$.  For the proof we assume that at least one of $K-c$, $\mu_2-K$ is not less than $\varepsilon$, otherwise $x_n(\omega) \in [c,\mu_2]$
is already in $(K-\varepsilon, K+\varepsilon)$.  

Choose $\omega$ to be any trajectory in the a.s. event described by \eqref{def:mathcalN}, and suppose that $x_n(\omega) \in [c, K-\varepsilon] \cup [K+\varepsilon, \mu_2]$. It follows from \eqref{def:mathcalN} and \eqref{cond:alphak} that on this trajectory $\alpha_{k-1}(\omega)$ satisfies \eqref{big_alpha}, for $k=n+1, \dots, 
n+N_2$. 

Applying Lemma~\ref{lem:PBC} with $b=\alpha+\nu l$, $a_1$ as chosen in \eqref{def:a} in place of $a$, 
and $\gamma$ defined in \eqref{gam1}, we arrive at 
$$
\left| x_{j+1}(\omega)-K  \right|   \leq   \gamma \left| x_{j}(\omega)-K \right|,
$$
for $j=n+1, n+1, \dots, n+N_2$. If $x_n(\omega) \in [c,K-\varepsilon]$, this  relation implies 

\begin{eqnarray*}
\left| x_{n+N_2}(\omega) -K \right| &\leq& {\gamma}^{N_2} \left| x_n(\omega)-K \right|\\ 
&\leq& {\gamma}^{-\log_{\gamma}((K-c)/\varepsilon)} \left| x_n(\omega)-K \right| \\
&<& \frac{\varepsilon}{K-c} \left| x_n(\omega)-K \right| \leq \varepsilon.
\end{eqnarray*}
Similarly, for $x_n(\omega) \in [K+\varepsilon,\mu_2]$,
\begin{eqnarray*}
\left| x_{n+N_2}(\omega) -K \right|  &\leq& {\gamma}^{N_2} \left| x_n(\omega)-K \right| \\
&\leq& {\gamma}^{-\log_{\gamma}((\mu_2-K)/\varepsilon)} \left| x_n(\omega)-K \right| \\
&<& \frac{\varepsilon}{\mu_2-K} \left| x_n(\omega)-K \right| \leq \varepsilon.
\end{eqnarray*}
\end{itemize}

Now we bring together all parts of the proof. In Part (i), we verified that there exists a finite  random 
number 
$n_0$ such that, for any $\omega\in\Omega$, $x_n(\omega) \in [\mu_1,\mu_2]$ for $n \geq n_0(\omega)$. For  $N_1$ and $N_2$ and $a_1$,  
defined in \eqref{def:N1}, \eqref{def:N2} and \eqref{def:a}, respectively, 
\eqref{def:mathcalN} holds for some random number $\mathcal N$. Then, in particular, $x_n\in [\mu_1,\mu_2]$ for all $n \ge\mathcal N(\omega)>n_0(\omega)$.

In Part (ii), we proved that there exists  $k(\omega)\in [0,  N_1]$, such that for any $\omega\in\Omega$
\[
x_{\mathcal N(\omega)+k(\omega)}\in (c, \mu_2).
\]
Finally, in Part (iii), we showed that 
\[
\mathbb{P}\left[\omega\in \Omega: |x_{\mathcal{N}_1(\omega)}(\omega)-K|\leq \varepsilon \quad \text{for} \quad \mathcal{N}_1:=\mathcal{N}+N_1+N_2 \right]=1.
\]
An application of Lemma~\ref{lemma_add1} concludes the proof.
\end{proof}

We recall that by Corollary \ref{new_add1}, the positive equilibrium of the 
unperturbed PBC equation with constant $\alpha$ is globally asymptotically stable if we choose $\alpha>1-1/M$. 
The next theorem presents conditions under which the introduction of a stochastic perturbation of $\alpha$ 
has the effect of 
a.s. stabilising the positive equilibrium. In both cases, the presence of noise has the effect of ensuring that on an event of probability one, solutions (regardless of a positive initial value) will eventually enter the domain of local stability predicted by Lemma \ref{lemma_add1}. In this sense, we are showing that pathwise local stability, together with an appropriate noise perturbation, imply a.s. global stability.

In the first part, we require the support of the stochastic perturbation to be symmetric ($\nu=1$). A.s. global stability of the equilibrium of the stochastic PBC can be achieved by an appropriate choice of noise intensity $l$ if $\alpha\leq 1-1/M$ as long as $\alpha$ is closer to $1-1/M$ than to $1-1/M_\varepsilon$. Hence the parameter range corresponding to known global asymptotic stability is extended by an appropriate stochastic perturbation.

In the second part, we show that the a.s. stability region can be extended further, to $\alpha > 1/M_\varepsilon$, by allowing the support of the perturbations to extend to the right ($\nu>1$), so that, with probability one, values of the sequence $\{\alpha_n\}_{n\in\mathbb{N}_0}$ exceed $1-1/M$ sufficiently often on an a.s. event.

\begin{theorem}
\label{multi_local}
Suppose that Assumptions~\ref{as:slope}, \ref{as:3} hold, and
there is an $\varepsilon>0$ and 
$M>M_\varepsilon>1$ such that \eqref{eq:M1cond} is satisfied.
Suppose further that one of the two following conditions holds:
\begin{enumerate}
\item Assumption~\ref{as:chi1} holds with $\nu=1$,
\begin{equation}\label{eq:al1}
\alpha \in\left (1 - \frac{1}{2 M_\varepsilon}-\frac{1}{2M},1\right), 
\end{equation}
and 
\begin{equation}
\label{eq:noi_M1}
l \in \left( \max\left\{1-\frac{1}{M} -\alpha,0\right\},\, \min\left\{\alpha-\left(1-\frac{1}{M_\varepsilon}\right),\,1-\alpha\right\}\right);
\end{equation}
\item Assumption~\ref{as:chi1} holds with 
$\nu>1$, and
\begin{equation}\label{alpha_cond}
\alpha \in \left(1 - \frac{1}{M_\varepsilon},1\right)
\end{equation} 
is such that
\begin{equation}
\label{nu_cond}
1-\frac{1}{M} -\alpha < \nu \left( \alpha - \left(1 - \frac{1}{M_\varepsilon}\right)  \right)
\end{equation}
and
\begin{equation}
\label{eq:n_M1}
l \in \left( \frac{1}{\nu} \max \left\{  1-\frac{1}{M} -\alpha, 0 \right\}, \min\left\{ \alpha - \left(1 -\frac{1}{M_\varepsilon}\right), \frac{1-\alpha}{\nu} \right\} \right).
\end{equation}
\end{enumerate}

Let $\{x_n\}_{n\in\mathbb{N}_0}$ be any solution of equation \eqref{eq:multi} with $x_0>0$. Then
\[
 \lim_{n\to\infty}x_n=K,\quad \text{a.s.}
\]
\end{theorem}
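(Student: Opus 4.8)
The plan is to recognise that Theorem~\ref{multi_local} is a direct consequence of Lemma~\ref{lemma_add3}, so that the proof reduces to checking, in each of the two cases, that the stated ranges for $\alpha$, $l$ and $\nu$ force the hypotheses of that lemma. Recall that Lemma~\ref{lemma_add3} requires Assumptions~\ref{as:slope}, \ref{as:3}, the local Lipschitz bound \eqref{eq:M1cond} with $M_\varepsilon\in(1,M)$, Assumption~\ref{as:chi1}, and the two-sided condition \eqref{eq_prob}, namely $a:=\alpha-l>1-1/M_\varepsilon$ together with $1-1/M<b:=\alpha+l\nu<1$. All of the structural assumptions are already among the hypotheses of the theorem, so the only work is to verify that the admissible interval for $l$ in each case is non-empty and sits inside the region described by \eqref{eq_prob}.

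For Case~1 ($\nu=1$) one has $a=\alpha-l$ and $b=\alpha+l$, and the three inequalities in \eqref{eq_prob} translate into $l<\alpha-(1-1/M_\varepsilon)$, $l>1-1/M-\alpha$ and $l<1-\alpha$. These are exactly the constraints defining the interval \eqref{eq:noi_M1} (with the lower endpoint replaced by its maximum with $0$, since $l>0$). Non-emptiness hinges on the observation that $1-\tfrac{1}{2M_\varepsilon}-\tfrac{1}{2M}$ is the midpoint of $1-1/M_\varepsilon$ and $1-1/M$: condition \eqref{eq:al1}, that $\alpha$ exceed this midpoint, is precisely equivalent to $\alpha-(1-1/M_\varepsilon)>1-1/M-\alpha$, which places the lower endpoint strictly below the upper endpoint $\alpha-(1-1/M_\varepsilon)$. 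The remaining comparisons against $0$ and $1-\alpha$ are immediate, and Lemma~\ref{lemma_add3} then yields the claim.

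For Case~2 ($\nu>1$) one has $a=\alpha-l$ and $b=\alpha+l\nu$, so \eqref{eq_prob} becomes $l<\alpha-(1-1/M_\varepsilon)$, $l>\tfrac{1}{\nu}(1-1/M-\alpha)$ and $l<\tfrac{1-\alpha}{\nu}$, which are exactly the endpoints appearing in \eqref{eq:n_M1}. Here non-emptiness reduces to showing the lower endpoint lies below the upper endpoint: the comparison $\tfrac{1}{\nu}(1-1/M-\alpha)<\alpha-(1-1/M_\varepsilon)$ is precisely hypothesis \eqref{nu_cond}, the comparison $\tfrac{1}{\nu}(1-1/M-\alpha)<\tfrac{1-\alpha}{\nu}$ collapses to $-1/M<0$, and the positivity of both upper bounds follows from $\alpha\in(1-1/M_\varepsilon,1)$ in \eqref{alpha_cond}. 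Every admissible $l$ therefore satisfies \eqref{eq_prob}, and a second application of Lemma~\ref{lemma_add3} gives $\lim_{n\to\infty}x_n=K$ almost surely.

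I do not expect a genuine obstacle: the substantive mechanism---that a finite random entry time into $[\mu_1,\mu_2]$, a bounded deterministic push into $[c,\mu_2]$, and a Borel--Cantelli run of favourable noise values together carry the trajectory into the local-stability window $(K-\varepsilon,K+\varepsilon)$---is already packaged inside Lemma~\ref{lemma_add3}. The only point needing care is that in the asymmetric case one must use $b=\alpha+l\nu$ rather than $\alpha+l$; it is exactly this asymmetry ($\nu>1$) that lets the favourable values of $\alpha_n$ clear the threshold $1-1/M$ often enough to permit $\alpha$ as low as $1-1/M_\varepsilon$.
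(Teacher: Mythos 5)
Your proof is correct and takes essentially the same route as the paper: both reduce the theorem to Lemma~\ref{lemma_add3} by checking that the $l$-interval in each case is non-empty (via \eqref{eq:al1} in Case 1 and \eqref{nu_cond} in Case 2) and that its endpoints are exactly the inequalities of \eqref{eq_prob}. If anything, your Case 1 verification is cleaner than the paper's, whose displayed chain $\alpha-l>2\alpha-\left(1-\frac{1}{M}\right)>1-\frac{1}{M}>1-\frac{1}{M_\varepsilon}$ is garbled (its first inequality would require $l<1-\frac{1}{M}-\alpha$, the opposite of the stated lower bound on $l$), whereas your direct deduction of $\alpha-l>1-\frac{1}{M_\varepsilon}$ from $l<\alpha-\left(1-\frac{1}{M_\varepsilon}\right)$ is what is actually needed.
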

\begin{proof}
We treat each part in turn, showing that the conditions of Lemma \ref{lemma_add3} are satisfied.
\begin{enumerate}
\item Let Assumption~\ref{as:chi1} hold with $\nu=1$. By \eqref{eq:al1}
$$ 
1-\frac{1}{M} -\alpha < \alpha-\left(1-\frac{1}{M_\varepsilon}\right), 
$$
and so the interval for $l$ in \eqref{eq:noi_M1} is non-empty. Thus, applying \eqref{eq:noi_M1},
\[
\alpha+l<1, \quad \alpha+l>1-\frac{1}{M}
\]
and, additionally by \eqref{alpha_cond}
\[
\alpha-l>2\alpha-\left(1-\frac{1}{M}\right)>1-\frac{1}{M}>1-\frac 1{M_\varepsilon}.
\]
The result then follows from Lemma \ref{lemma_add3}.

\item Let Assumption~\ref{as:chi1} hold with $\nu>1$. 
It is clear that $(1-1/M-\alpha)/\nu<(1-\alpha)/\nu$, and this with 
\eqref{nu_cond} ensures that the interval for $l$ in \eqref{eq:n_M1}
is non-empty. It remains to see that \eqref{eq:n_M1} implies
$$ l\nu < 1- \alpha, \quad \alpha - l>1 -\frac{1}{M_\varepsilon}, $$
and
\[
\alpha+l \nu> 1 - \frac{1}{M}.
\]
The result then follows from Lemma \ref{lemma_add3}.
\end{enumerate}
\end{proof}


\section{Additive Noise}
\label{sec:4}


Consider the case where the PBC equation is perturbed externally, 
and therefore includes an additive stochastic term
\begin{equation}
\label{eq:add}
x_{n+1}= \max \left\{ f(x_n)-\alpha (f(x_n)-x_n) +  l\xi_{n+1}, 0 \right\}, \quad x_0>0, \quad n \in {\mathbb N_0}.
\end{equation}
Since the perturbation intensity $l$ is fixed, a.s. asymptotic convergence to the positive point equilibrium $K$ is impossible. 
However, we can show that if $K$ is a stable equilibrium of the unperturbed prediction-based controlled 
equation, then solutions of 
the perturbed PBC equation eventually enter and remain within a neighbourhood of $K$, where the size of that neighbourhood depends on $l$. In this section we suppose that the support of the perturbation is symmetric ($\nu=1$).

\begin{lemma}
\label{lem:add_noise}
Suppose that Assumptions~\ref{as:slope},\ref{as:3} and \ref{as:chi1} (with $\nu=1$) hold,
and that
\[
\alpha \in \left(1-\frac{1}{M},1\right).
\] 
Let $\{x_n\}_{n\in\mathbb{N}_0}$ be any solution of \eqref{eq:add} with $x_0>0$. If 
\begin{equation}
\label{eq:l_cond}
l< (1-\gamma)(K-c),
\end{equation}
where $\gamma$ is defined by \eqref{def_gamma} in the statement of Corollary \ref{new_add1}, then the following statement holds: for any $\varepsilon>0$, 
\begin{equation}\label{eq:bounds}
\mathbb{P}\left[\text{There exists } \mathcal{N}_0<\infty\text{ s.t. }x_n\in\left(K-\frac{l}{1-\gamma}-\varepsilon, K+\frac{l}{1-\gamma}+ \varepsilon\right),\,n\geq\mathcal{N}_0\right]=1.
\end{equation}
\end{lemma}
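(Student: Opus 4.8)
The plan is to separate the argument into a deterministic ``capture-and-contract'' core and a probabilistic ``entry'' step, confining all randomness to the latter. Write the recursion as $x_{n+1}=\max\{F_\alpha(x_n)+l\xi_{n+1},0\}$ with $F_\alpha$ as in \eqref{eq:F}. The deterministic input is Corollary \ref{new_add1}: for $x\ge c$ we have $|F_\alpha(x)-K|\le\gamma|x-K|$ with $\gamma=\max\{\alpha,1-\alpha\}$. Since $|\xi_{n+1}|\le 1$ under Assumption \ref{as:chi1} with $\nu=1$, this yields the fundamental one-step estimate
\[
x_n\ge c\ \Rightarrow\ |x_{n+1}-K|\le\gamma|x_n-K|+l .
\]

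First I would show that $[c,2K-c]$ is invariant under the noisy map. If $x_n\in[c,2K-c]$, i.e. $|x_n-K|\le K-c$, the estimate above gives $|x_{n+1}-K|\le\gamma(K-c)+l$, and hypothesis \eqref{eq:l_cond}, namely $l<(1-\gamma)(K-c)$, forces $\gamma(K-c)+l<K-c$. Hence $x_{n+1}\in(c,2K-c)$, so $x_{n+1}>0$ and the truncation never activates there. Iterating the one-step estimate from any $n_0$ with $x_{n_0}\in[c,2K-c]$ gives, for all $k$,
\[
|x_{n_0+k}-K|\le\gamma^k(K-c)+\frac{l}{1-\gamma},
\]
a purely deterministic bound valid for every $\omega$. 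Given $\varepsilon>0$, choosing $k$ so large that $\gamma^k(K-c)<\varepsilon$ places $x_{n_0+k}$ inside $\left(K-\frac{l}{1-\gamma}-\varepsilon,K+\frac{l}{1-\gamma}+\varepsilon\right)$, and since the right-hand side is decreasing in $k$ it stays there for all later indices. Thus once a trajectory enters $[c,2K-c]$, conclusion \eqref{eq:bounds} holds for that $\omega$ from a finite deterministic time onward.

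It remains to prove that almost every trajectory enters $[c,2K-c]$ in finite time, and this is where the probabilistic argument and the main difficulty lie. I would first record a deterministic a priori bound: since $f\le\mu_2$, we have $x_{n+1}\le\alpha x_n+(1-\alpha)\mu_2+l$, so $\limsup_n x_n\le\mu_2+l/(1-\alpha)=:B$ and the trajectory is eventually confined to the compact set $[0,B]$. The trouble is the region $[0,c)$: there the contraction estimate is unavailable, $F_\alpha$ supplies only the weak upward drift $F_\alpha(x)>x$ (which degenerates near $0$ when $f(0)=0$, as for the Ricker map), and the truncation at $0$ may act, so there is no deterministic escape. Instead I would argue by Borel--Cantelli in the spirit of Lemma \ref{cor:barprob}. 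Fix $\beta_0\in(0,1)$ with $\tau:=\mathbb P\{\xi\in(\beta_0,1]\}>0$ (possible since the support of $\xi$ is $[-1,1]$); on a block of consecutive indices with $\xi_k\in(\beta_0,1]$, each step with $x_k<K$ gains at least $l\beta_0$ (because $F_\alpha(x_k)\ge x_k$ and the noise adds at least $l\beta_0$), so after a number of such steps bounded uniformly over $[0,B]$ the trajectory is carried into $[c,2K-c]$, where the invariance just proved traps it. As the blocks are independent with probability $\tau^{(\mathrm{block\ length})}>0$, infinitely many occur a.s. by Lemma \ref{lem:BC}, yielding a finite a.s. entry time and hence \eqref{eq:bounds}.

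The step I expect to be the main obstacle is making this ``favorable block'' argument uniform over starting points in $[0,B]$ while handling the possibility that a single large step overshoots $[c,2K-c]$ into the upper region $(2K-c,B]$ (which can happen when $x_k<c$ lies near the maximiser of $f$). One must verify that such overshoots land where the one-step contraction, valid since $x>c$ there, pulls the trajectory back toward $[c,2K-c]$ rather than below $c$, so the capture is not lost. Once a uniform positive lower bound on the probability of entry from any point of $[0,B]$ within a fixed number of steps is secured, the Borel--Cantelli conclusion is routine.
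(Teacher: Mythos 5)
Your deterministic steps are correct and coincide with the paper's: the invariance of $[c,2K-c]$ under the noisy map is exactly the paper's Part (i) (Corollary \ref{new_add1} plus $|\xi_{n+1}|\le 1$ plus \eqref{eq:l_cond}), and your explicit iteration $|x_{n_0+k}-K|\le \gamma^k(K-c)+l/(1-\gamma)$ is a cleaner route to \eqref{eq:bounds} than the paper's dichotomy argument in its Part (iii). Your entry argument is also the paper's in outline: Lemma \ref{cor:barprob} applied to blocks of sufficiently positive noise (the paper takes $\xi_j\in(1/2,1)$ on blocks of length $\left[2c/l\right]+1$, gaining $l/2$ per step) shows a trajectory cannot remain in $[0,c)$ forever.

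The genuine gap is precisely the step you name and then defer. Escaping $[0,c)$ is not the same as entering $[c,2K-c]$: at the exit step $x_{k+1}=F_\alpha(x_k)+l\xi_{k+1}$ with $x_k\in[0,c)$, the only estimate available below $c$ is non-expansiveness (Lemma \ref{lem:PBC}(i)), which allows $F_\alpha(x_k)$ to be as large as $K+\bigl((1-\alpha)M-\alpha\bigr)(K-x_k)$; nothing in the hypotheses prevents $F_\alpha(x_k)-l>2K-c$, i.e.\ an overshoot of the trapping interval for \emph{every} admissible noise value, since the noise only shifts the landing point by at most $l<K-c$. From an overshoot point $x'>2K-c$, the contraction gives only $F_\alpha(x')\in\bigl[K-\gamma(x'-K),\,K+\gamma(x'-K)\bigr]$ with $\gamma(x'-K)>\gamma(K-c)$, which does not prevent the next iterate from falling below $c$. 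Hence the scenario in which the trajectory alternates forever between $[0,c)$ and the region above $2K-c$ without ever touching $[c,2K-c]$ is not excluded by anything you prove, and the remedy you propose --- a uniform positive lower bound on the capture probability from any point of $[0,B]$ within a fixed number of steps --- cannot come from the noise alone, because both the overshoot and the location of the return are dictated by $F_\alpha$ up to an $l$-shift. The paper supplies the missing ingredient deterministically rather than probabilistically: for $x_0>2K-c$ it uses Assumption \ref{as:3} and \eqref{eq:l_cond} to get $x_1<F_\alpha(2K-c)+l\le K+\alpha(K-c)+l<2K-c$, so one step from the upper region always returns below $2K-c$, and only then combines this with the trapping property of Part (i). Your proof needs this (or an equivalent control of the region above $2K-c$), together with an argument that the resulting alternation between the two bad regions terminates almost surely, before the Borel--Cantelli conclusion can be called routine.
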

\begin{proof}
Our proof has three parts.
\begin{itemize}
\item[Part (i):] We show that for all $\omega\in\Omega$, if  $x_n(\omega) \in [c,2K-c]$, then all $x_j(\omega) \in (c,2K-c)$, for $j \geq n$. 

Since $\alpha \in (1-1/M,1)$, then by \eqref{contraction} in Corollary~\ref{new_add1}, Assumptions 
\ref{as:3} and \ref{as:chi1} ($\nu=1$), and \eqref{eq:l_cond}, if $x_n(\omega)\in [c, 2K-c]$ and $F_\alpha$ is defined by \eqref{eq:F},
\begin{eqnarray*}
\left| x_{n+1}(\omega)-K \right| & = & \left| F_{\alpha} (x_n(\omega)) +  l\xi_{n+1} - K \right| \\
&\leq& \left| F_{\alpha} (x_n(\omega))  - K \right| +  \left|  l\xi_{n+1} \right| \\
& \leq & \gamma \left| x_{n}(\omega)-K \right| + l\\ &<& \gamma (K-c) + (1-\gamma) (K-c) = K-c, 
\end{eqnarray*}
thus $x_{n+1}(\omega) \in (c,2K-c)$. By induction, $x_j(\omega) \in [c,2K-c]$ for any $j \geq n$.\\

\item[Part (ii):]
We prove that for any $x_0 \notin [c,2K-c]$ 
\[
\mathbb{P}\left[\text{There exists }\mathcal N_1<\infty\text{ s.t. }x_{\mathcal{N}_1} \in [c,2K-c]\right]=1.
\]
If  $x_0 \in [0,c]$ then the following is true for all $\omega\in\Omega$: either $x_n(\omega) \in [0,c]$ for all $n\in \mathbb N_0$, or $x_m(\omega) >c$ for some unknown $m \in \mathbb N_0$. 
If $x_n(\omega) \in [0,c]$ for all $n\in \mathbb N_0$,  denote
\begin{equation}
\label{def:r}
r=\left[ \frac{2 c}{l} \right] +1.
\end{equation}
By Lemma~\ref{cor:barprob}, 
\begin{equation*}
\mathbb{P}\left[\text{There exists }\mathcal N_2<\infty\text{ s.t. }\xi_{j} \in \left( \frac{1}{2}, 1\right)\text{ when }j=\mathcal N_2+1,  \mathcal N_2+2, \dots, \mathcal N_2+r\right]=1
\end{equation*}
Note that if $\omega\in\Omega$ is such that for some $n\in\mathbb{N}_0$, $x_n(\omega) \in (0, c]$ and $\displaystyle \xi_{n+1}(\omega) \in \left( \frac{1}{2}, 1\right)$,
we have
\begin{equation*}
x_{n+1}(\omega)  =  F_{\alpha}(x_n(\omega))+l\xi_{n+1}(\omega) \geq   x_n(\omega) + \frac{l}{2}.
\end{equation*}
Therefore 
$$ 
x_{\mathcal N_2+1}(\omega) \geq x_{\mathcal N_2}(\omega)+\frac{l}{2}, ~~\dots~, x_{\mathcal N_2+r}(\omega)\geq x_{\mathcal N_2}(\omega)+r\frac{l}{2} > c,$$
which makes the first case, $x_n(\omega) \in [0,c]$ for all $n\in \mathbb N_0$, impossible.

If  $x_0> 2K-c$ then, by Assumption~\ref{as:3}, $F_{\alpha} (x_0) < F_{\alpha} (2K-c)$, so, by our choice of $l$,
\begin{eqnarray*}
x_1(\omega) &<& F_{\alpha} (2K-c) + l \xi_1(\omega)\\ 
&<& K+l \\
 &<& K + \alpha(K-c)\\ &<& K+K-c\\&=&2K-c.
\end{eqnarray*}
Together with Part (i), this allows us to conclude that
\begin{equation}\label{eq:n1rand}
\mathbb{P}\left[\text{There exists } \mathcal{N}_1<\infty\text{ s.t. }x_{\mathcal{N}_1} \in [c,2K-c]\right]=1.
\end{equation}


\item[Part (iii):] We may now proceed to prove \eqref{eq:bounds}. By \eqref{contraction} in the statement of Corollary~\ref{new_add1}, 
\[
|F_{\alpha}(x)-K | < \gamma |x-K|,\quad x\in[c,2K-c].
\]
By Parts (i) and (ii) we only need to consider trajectories $\omega$ belonging to the a.s. event referred to in \eqref{eq:n1rand}. Introducing the nonnegative sequence $\{y_n(\omega)\}_{n\geq\mathcal{N}_1(\omega)}$ with $y_n(\omega) := \left| x_{n}(\omega)-K \right|$, we notice that  
\begin{equation}
\label{eq:flower}
0\leq y_{n+1}(\omega) < \gamma y_n(\omega) + l.
\end{equation} 
There are two possibilities.
\begin{enumerate}
\item If $\displaystyle y_n(\omega)< \frac{l}{1-\gamma}$ then
$$
y_{n+1}(\omega) < \gamma y_n(\omega) + l < \frac{ \gamma l + (1-\gamma) l}{1-\gamma} = \frac{l}{1-\gamma},
$$
and therefore all successive terms will be on the interval $\left[0,\frac{l}{1-\gamma}\right)$.  

\item If $\displaystyle y_n(\omega) \geq  \frac{l}{1-\gamma}$ then 
$$
y_{n+1}(\omega) < \gamma y_n(\omega) + l  \leq \gamma y_n(\omega) + (1-\gamma) y_n(\omega) = y_n(\omega),
$$
and thus $\{y_n(\omega)\}_{n\in\mathcal{N}_1(\omega)}$ is a positive decreasing sequence for as long as each $y_n \geq l/(1-\gamma)$. 
\end{enumerate}
Hence, either $\{y_n(\omega)\}_{n\in\mathcal{N}_1(\omega)}$ has a limit
$\displaystyle \lim_{n \to \infty} y_n(\omega) = A(\omega)$ satisfying, by \eqref{eq:flower},
$$A(\omega) \leq \frac{l}{1-\gamma},$$
or it eventually drops below $l/(1-\gamma)$. Therefore, for any $\varepsilon>0$, 
\[
\mathbb{P}\left[\text{There exists } \mathcal{N}_0<\infty\text{ s.t. }y_n \in \left(0,\frac{l}{1-\gamma}+\varepsilon\right),\, n \geq \mathcal N_0\right]=1,
\]
which immediately implies \eqref{eq:bounds}, and the statement of the lemma.
\end{itemize} 
\end{proof}

Finally we show that it follows from Lemma \ref{lem:add_noise} that the neighbourhood of $K$ into which solutions eventually settle can be made arbitrarily small by placing an additional constraint on the noise intensity $l$.
\begin{theorem}
Suppose that Assumptions~\ref{as:slope}, \ref{as:3} and \ref{as:chi1} (with $\nu=1$) hold,
and that
\[
\alpha \in \left(1-\frac{1}{M},1\right).
\] 
Let $\{x_n\}_{n\in\mathbb{N}_0}$ be any solution of equation \eqref{eq:add} with $x_0>0$. For any $\varepsilon_1>0$, there exists $l$ satisfying \eqref{eq:l_cond} such that 
\begin{equation}
\label{eq:bound1}
\mathbb{P}\left[\text{There exists } \mathcal{N}<\infty\text{ s.t. }x_n \in \left[ \max\left\{ K-\varepsilon_1, 0 \right\}, K+\varepsilon_1 \right],\, n\geq\mathcal{N}\right]=1.
\end{equation}
\end{theorem}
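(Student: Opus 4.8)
The plan is to obtain this statement as an immediate consequence of Lemma~\ref{lem:add_noise}, whose conclusion \eqref{eq:bounds} already confines the solution to a neighbourhood of $K$ of half-width $l/(1-\gamma)+\varepsilon$ on an a.s. event. Since $\gamma=\max\{\alpha,1-\alpha\}$ is fixed once $\alpha$ is chosen (and lies in $(0,1)$ because $\alpha\in(1-1/M,1)\subset(0,1)$, so that $1-\gamma>0$), the quantity $l/(1-\gamma)$ can be driven below any prescribed threshold by shrinking the noise intensity $l$. Hence the entire task reduces to choosing $l$, together with the auxiliary parameter $\varepsilon$ in the lemma, small enough that the confining neighbourhood sits inside $[\max\{K-\varepsilon_1,0\},K+\varepsilon_1]$.

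Concretely, given $\varepsilon_1>0$, I would first fix
\[
l<(1-\gamma)\min\left\{\frac{\varepsilon_1}{2},\,K-c\right\}.
\]
This simultaneously guarantees $l<(1-\gamma)(K-c)$, so that \eqref{eq:l_cond} holds and Lemma~\ref{lem:add_noise} is applicable, and it ensures $l/(1-\gamma)<\varepsilon_1/2$. Applying Lemma~\ref{lem:add_noise} with the choice $\varepsilon=\varepsilon_1/2$ then yields, on an event of probability one, a finite random $\mathcal{N}_0$ past which
\[
x_n\in\left(K-\frac{l}{1-\gamma}-\varepsilon,\,K+\frac{l}{1-\gamma}+\varepsilon\right)\subseteq\left(K-\varepsilon_1,\,K+\varepsilon_1\right),
\]
the inclusion following from $l/(1-\gamma)+\varepsilon<\varepsilon_1/2+\varepsilon_1/2=\varepsilon_1$.

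It then remains to match the open interval above against the closed target interval $[\max\{K-\varepsilon_1,0\},K+\varepsilon_1]$ of \eqref{eq:bound1}. The upper endpoint is immediate from the inclusion just established. For the lower endpoint, if $\varepsilon_1\leq K$ then $\max\{K-\varepsilon_1,0\}=K-\varepsilon_1$ and the containment is clear; if $\varepsilon_1>K$ the lower bound is supplied automatically by the truncation in \eqref{eq:add}, which forces $x_n\geq 0=\max\{K-\varepsilon_1,0\}$ for every $n$. Taking $\mathcal{N}=\mathcal{N}_0$ then delivers \eqref{eq:bound1} on the same probability-one event.

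I expect no genuine obstacle: the result is essentially a restatement of Lemma~\ref{lem:add_noise} with the free parameter $l$ tuned against $\varepsilon_1$. The only point requiring mild care is the boundary case $\varepsilon_1>K$, where the lower endpoint of the target interval is $0$ rather than $K-\varepsilon_1$, and one must invoke non-negativity of the truncated solution in place of the two-sided bound coming from the lemma.
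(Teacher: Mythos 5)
Your proposal is correct and follows essentially the same route as the paper: choose the auxiliary $\varepsilon$ in Lemma~\ref{lem:add_noise} to be $\varepsilon_1/2$ and take $l$ small enough that both \eqref{eq:l_cond} and $l/(1-\gamma)<\varepsilon_1/2$ hold, then invoke \eqref{eq:bounds}. The paper's proof is exactly this two-line parameter tuning; your additional remark that the case $\varepsilon_1>K$ is covered by the non-negativity enforced by the truncation in \eqref{eq:add} is a detail the paper leaves implicit, and it is handled correctly.
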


\begin{proof}
Let us choose in the statement of Lemma~\ref{lem:add_noise},
$$\varepsilon \leq \frac{\varepsilon_1}{2}, \quad l \leq  \min\left\{ \varepsilon(1-\gamma), (1-\gamma)(K-c) \right\},$$
then a reference to \eqref{eq:bounds} in Lemma~\ref{lem:add_noise} completes the proof. 
\end{proof}

\section{Numerical Examples and Discussion}

Our numerical experiments are mostly concerned with the stabilising effect of the multiplicative noise.
First, let us illustrate stabilisation of the chaotic Ricker model using PBC with multiplicative noise.

\begin{example}
\label{ex_Ricker_1}
Consider the chaotic Ricker map \eqref{eq:ricker} with $r=5$.
As mentioned in Example~\ref{ex_Ricker}, inequality \eqref{eq:Mcond} is satisfied with 
$M>12$, while for $M_1=4.5$ and $\varepsilon =0.6$, \eqref{Ricker_cond} holds.
As $M \approx 12.8624$, we can take $M=12.87$ in further computations, \eqref{eq:Mcond} will be satisfied.
Thus, according to Theorem~\ref{multi_local}, we should choose $\alpha$ such that
$$
\alpha > 1 - \frac{1}{M_\varepsilon} \approx 0.778, \quad \alpha -l >1 - \frac{1}{M_\varepsilon}$$,  $$\alpha+ \nu l > 1 - \frac{1}{M} \approx 0.9223,
\quad \alpha+ \nu l <1.
$$
Let us take $\alpha =0.8$, $l= 0.02$, $\nu=1$. Then, $\alpha+l\xi_n > 1 - 1/M_\varepsilon$, 
Fig.~\ref{figure1} shows fast convergence of solutions to the equilibrium $K=1$.

\begin{figure}[ht]
\centering
\includegraphics[height=.28\textheight]{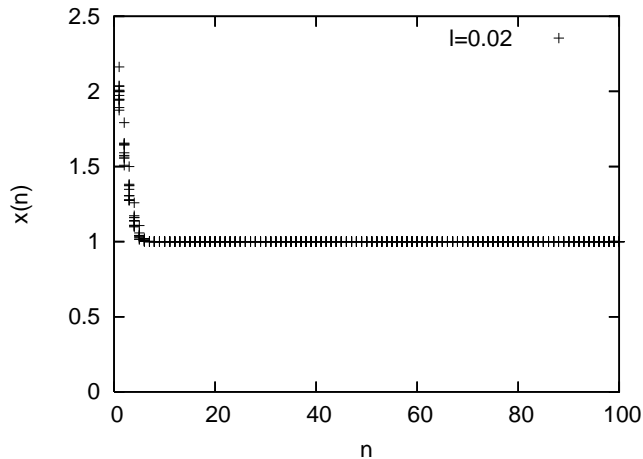}
\caption{Ten runs of the difference equation with $f$ as in (\protect{\ref{eq:ricker}}) with $r=5$ and
multiplicative stochastic perturbations with $\alpha =0.8$, $l=0.02$, $x_0=0.3$, $\nu=1$ and uniformly distributed noise.}
\label{figure1}
\end{figure}

Next, let us take $\alpha<1 - 1/M_\varepsilon$, for example, $\alpha=0.5$.
Fig.~\ref{figure2} illustrates the dynamics of the Ricker equation with deterministic PBC ($l=0$), 
and the multiplicative uniformly distributed noise with the growing perturbation amplitudes $l=0.05, 0.1, 0.2$.

\begin{figure}[ht]
\centering
\includegraphics[height=.23\textheight]{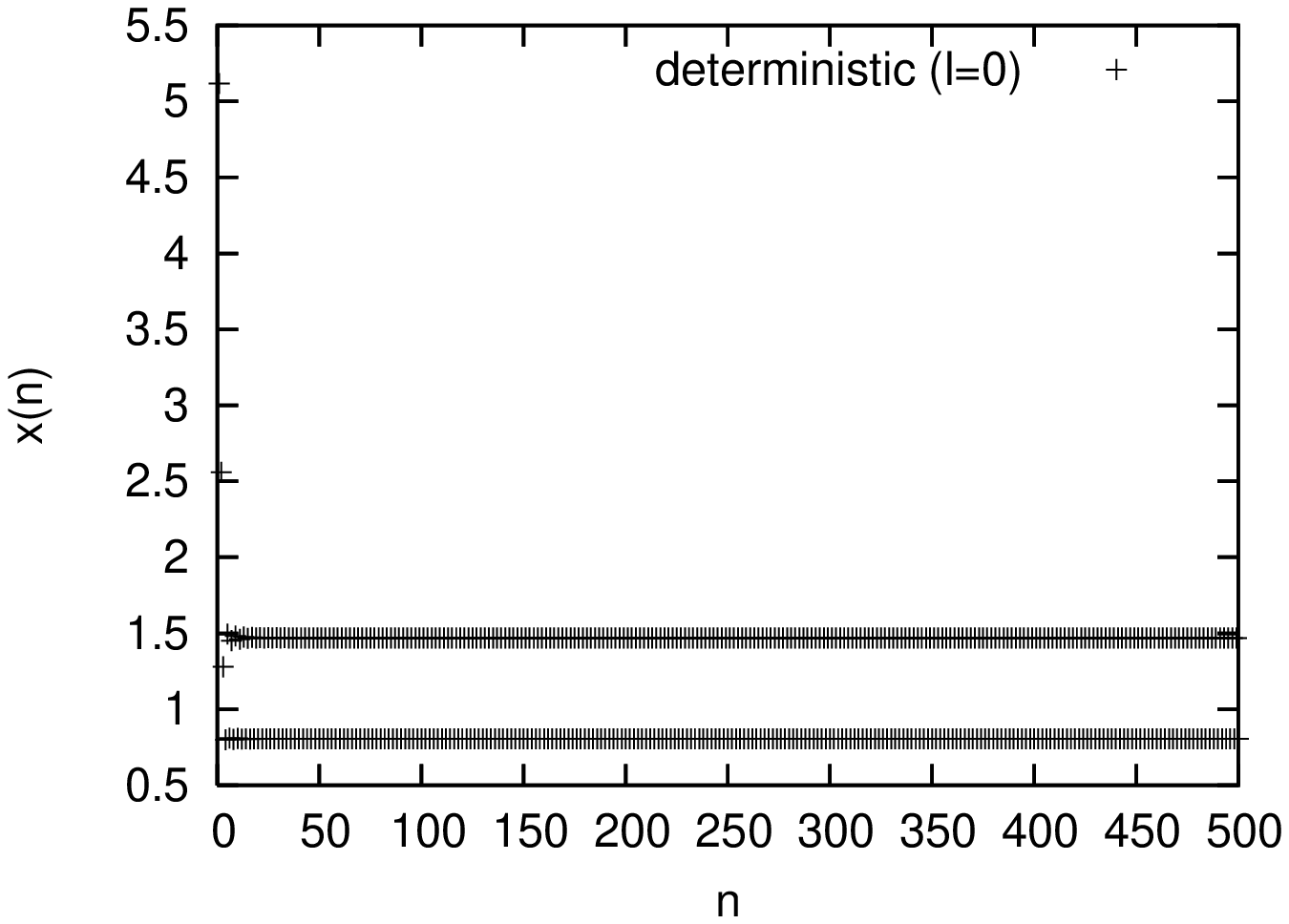}
\hspace{5mm}
\includegraphics[height=.23\textheight]{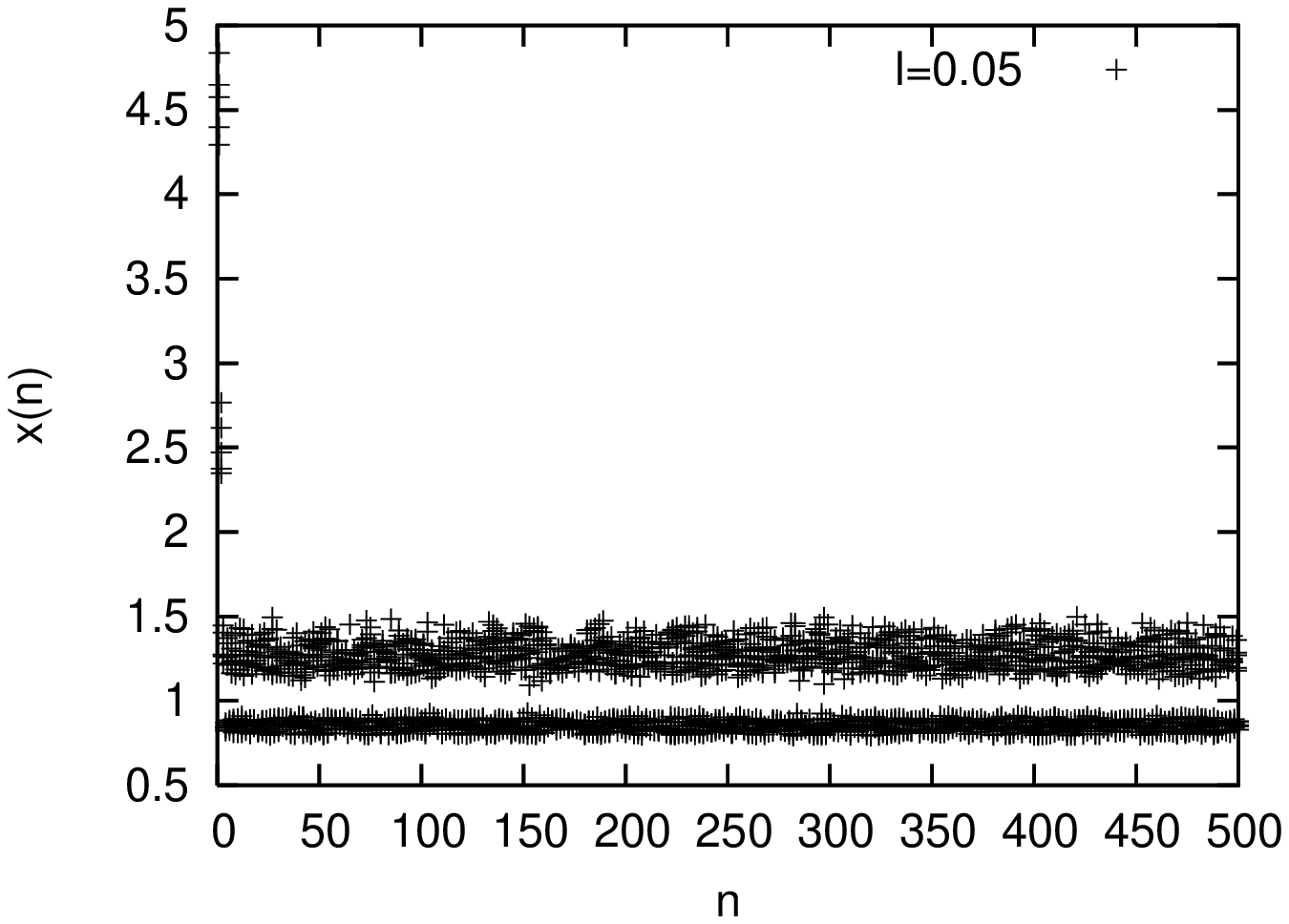}
\hspace{5mm}
\includegraphics[height=.23\textheight]{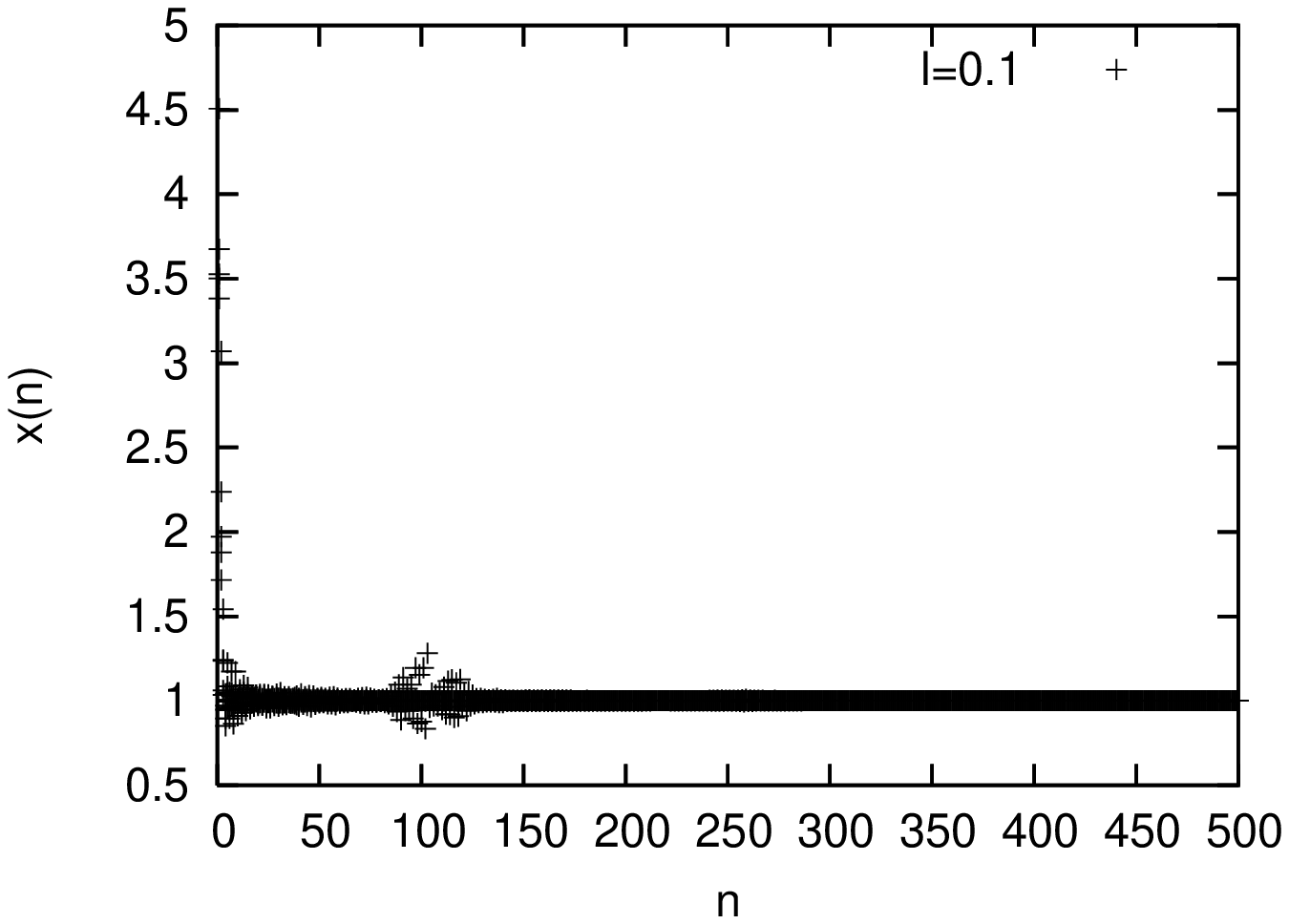}
\hspace{5mm}
\includegraphics[height=.23\textheight]{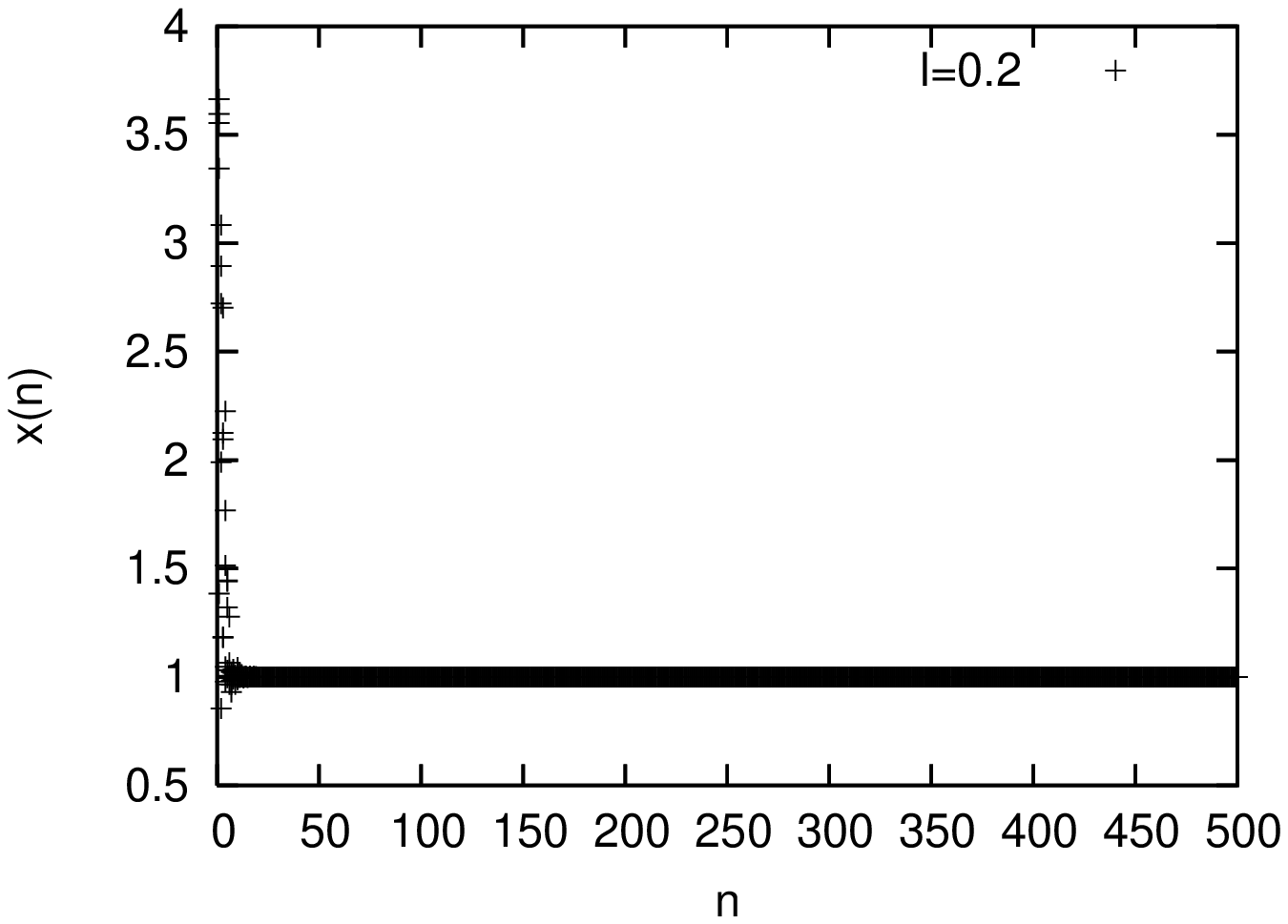}
\caption{Five runs of the difference equation with $f$ as in (\protect{\ref{eq:ricker}}) with $r=5$ and
multiplicative stochastic perturbations, where $\alpha =0.5$, $x_0=0.3$, $\nu=1$, and $l=0, 0.05, 0.1, 0.2$ (from left to right),
and uniformly distributed noise.}
\label{figure2}
\end{figure}

Finally, let us fix $\alpha=5$, $l=0.05$ and increase $\nu$. The distribution function of $\xi$ is chosen to be 
$\displaystyle e^{\ln(\nu+1) \ln(2\zeta)/\ln 2} - 1 $, where $\zeta$ is uniformly distributed on $(0,1)$. 
As $\zeta<0.5$ leads to $\xi<0$, half 
of the perturbations are negative. We can observe the stabilising effect of larger $\nu$ in 
Fig.~\ref{figure3}.

\begin{figure}[ht]
\centering
\includegraphics[height=.165\textheight]{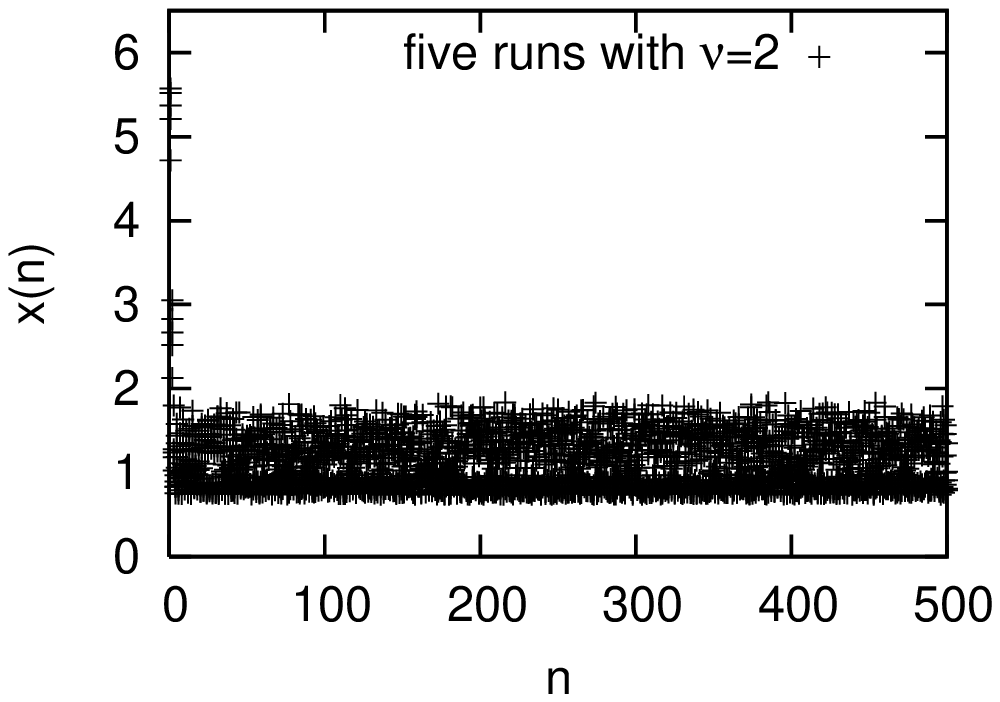}
\hspace{2mm}
\includegraphics[height=.165\textheight]{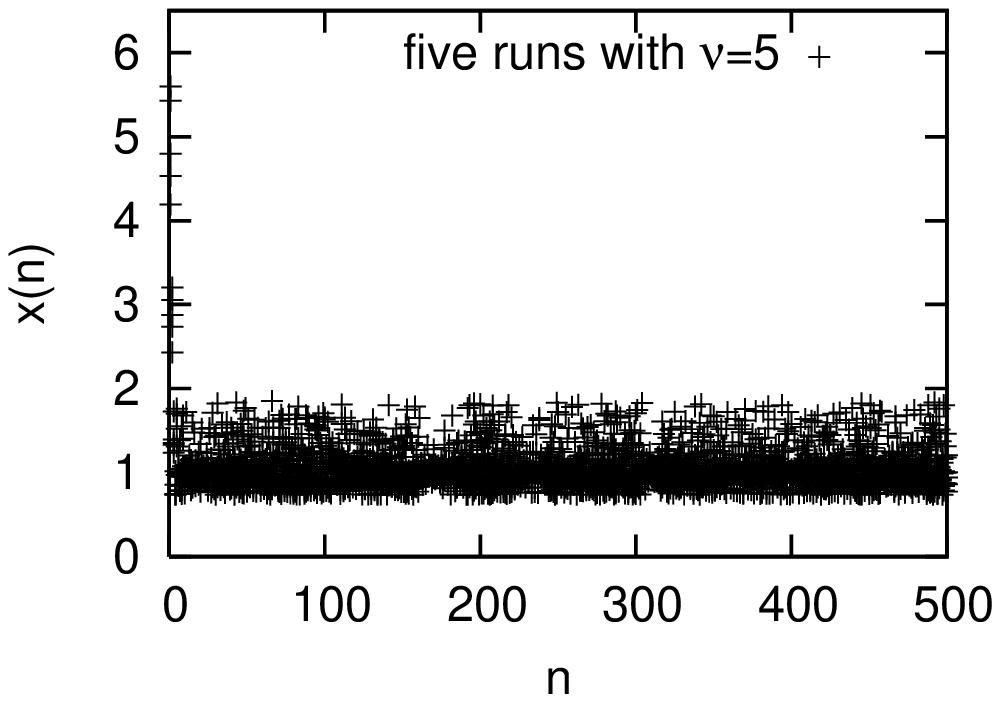}
\hspace{2mm}
\includegraphics[height=.165\textheight]{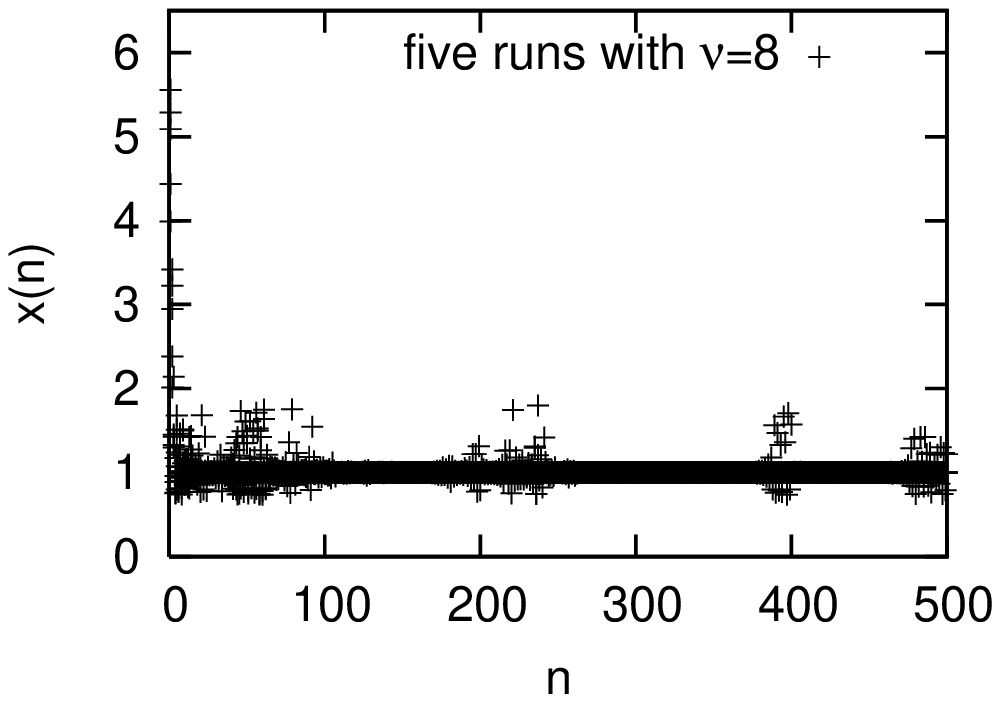}
\caption{Five runs of the difference equation with $f$ as in (\protect{\ref{eq:ricker}}) with $r=5$ and
multiplicative stochastic perturbations, where $\alpha =0.5$, $x_0=0.3$, $l=0.05$ and (left) $\nu=2$, (middle) $\nu=5$, (right) $\nu=8$.}
\label{figure3}
\end{figure}

%
\end{example}

Next, we illustrate the stabilising effect of a multiplicative stochastic perturbation in non-controlled 
models.

\begin{example}
Let us consider the example of Singer \cite[p.~266]{Singer} where a map has a locally stable
equilibrium, together with an attractive cycle.

Denote
$$ F(x)=7.86 x - 23.31 x^2+ 28.75x^3- 13.30x^4, \quad a:=F(0.99) \approx 0.055438317 $$
Consider the function
\begin{equation}
\label{eq:S3.1}
f(x) = \left\{ \begin{array}{ll} F(x),& \mbox{ if~~ } x \in [0,0.99], \\ \frac{F(0.99)}{x+0.01},
& \mbox{ if~~ } x \in  (0.99, \infty). \end{array}
\right.
\end{equation}
Thus
\begin{equation}
f(x) = \left\{ \begin{array}{ll} 7.86 x - 23.31 x^2+ 28.75x^3- 13.30x^4, & \mbox{ if~~ }  x \in [0,0.99], \\
\frac{100 F(0.99)}{100x+1}, &  \mbox{ if~~ } x \in  (0.99, \infty). \end{array} \right.
\label{Singfunc}
\end{equation}
Following \cite{Singer}, we notice that $F$ has a locally stable fixed point
$K \approx 0.7263986$
together with a locally stable period two orbit
$(\theta_1, \theta_2) \approx  (0.3217591,0.9309168)$.
Here  $c  \approx 0.3239799$.
In Fig.~\ref{figure5a}, we illustrate the function $f(x)$ introduced in \eqref{Singfunc} on the 
segment $[0.1.5]$, together with 
a two-cycle which is locally stable in the absence of perturbations.

\begin{figure}[ht]  
\centering
\includegraphics[height=.25\textheight]{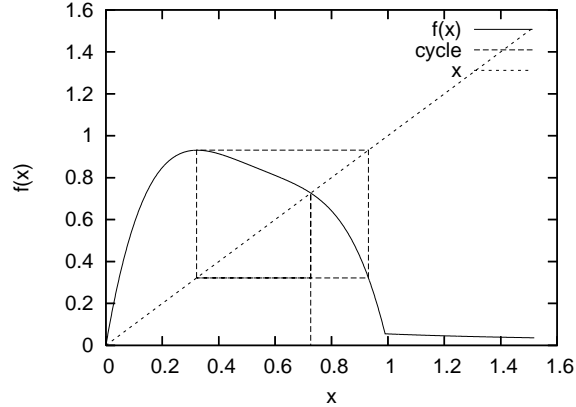}
\caption{
The map $f(x)$ introduced in (\protect{\ref{Singfunc}}) has an equilibrium $K \approx 0.7263986$
and a locally stable two-cycle $(0.3217591,0.9309168)$.}
\label{figure5a}
\end{figure}

In the following 10 numerical simulations, the initial point $x_0=0.3217$ is chosen very close to
the lower value $0.3217591\dots$ of the stable 2-cycle. Here the amplitude of uniformly distributed in $[-l,l]$
perturbations is $l=0.02$, $\xi$ is uniformly distributed in $[-1,1]$ 
\begin{equation}
x_{n+1}=(1+ l \xi)f(x_n). 
\label{pert_1}
\end{equation}
We observe that the stochastic perturbation can make
a locally (though not globally) asymptotically stable equilibrium, globally asymptotically stable. An important condition of this global
stability is that there is a neighbourhood of the equilibrium which is invariant for any perturbations.
On the other hand, the occasional perturbations amplitude should be large enough to leave the stable 2-orbit.
If we increase the amplitude to $l=0.03$, the process of attraction of solution to the locally stable equilibrium
is faster, see Fig.~\ref{figure5}, right.

\begin{figure}[ht]
\centering
\includegraphics[height=.24\textheight]{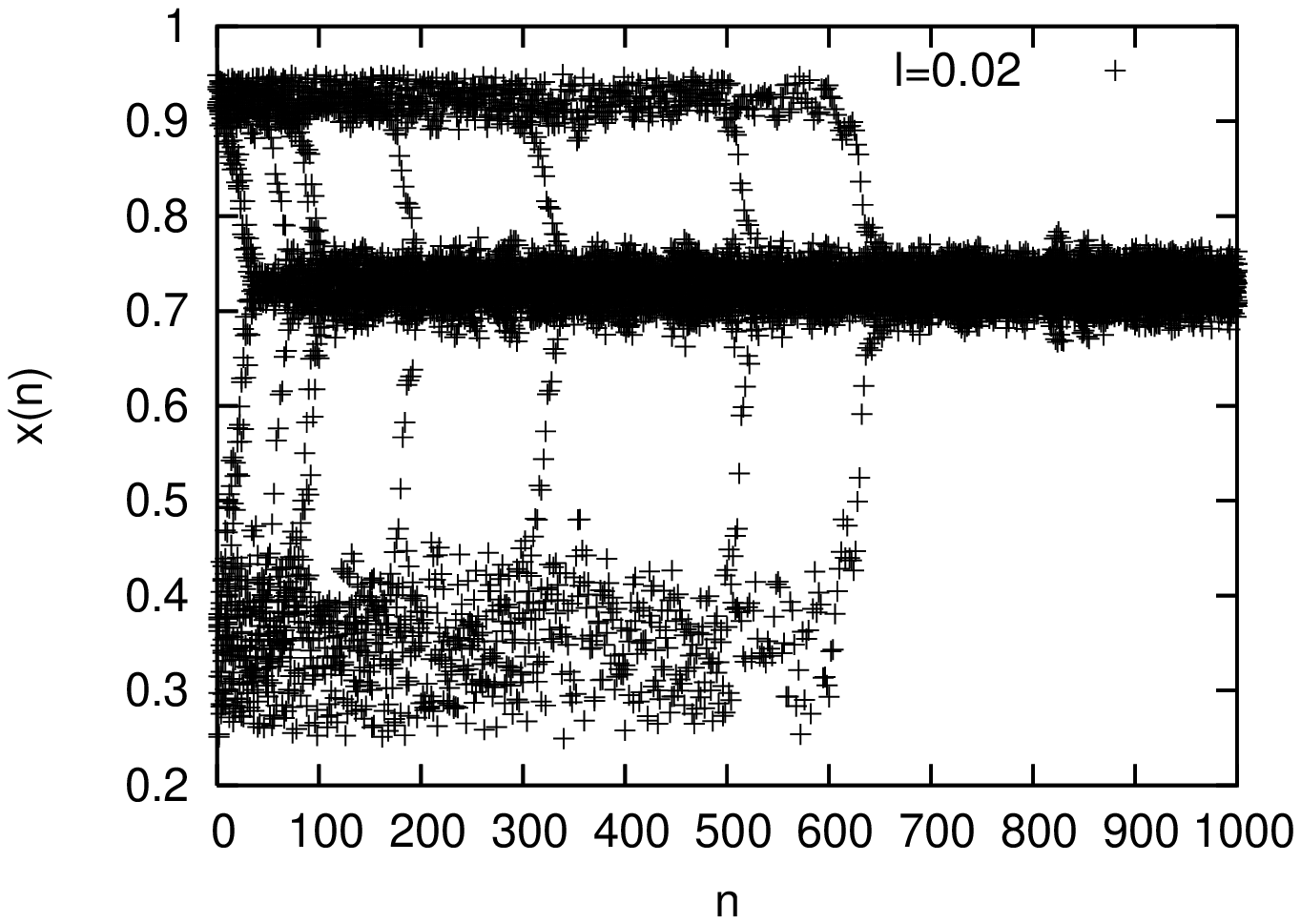}
\hspace{8mm}
\includegraphics[height=.24\textheight]{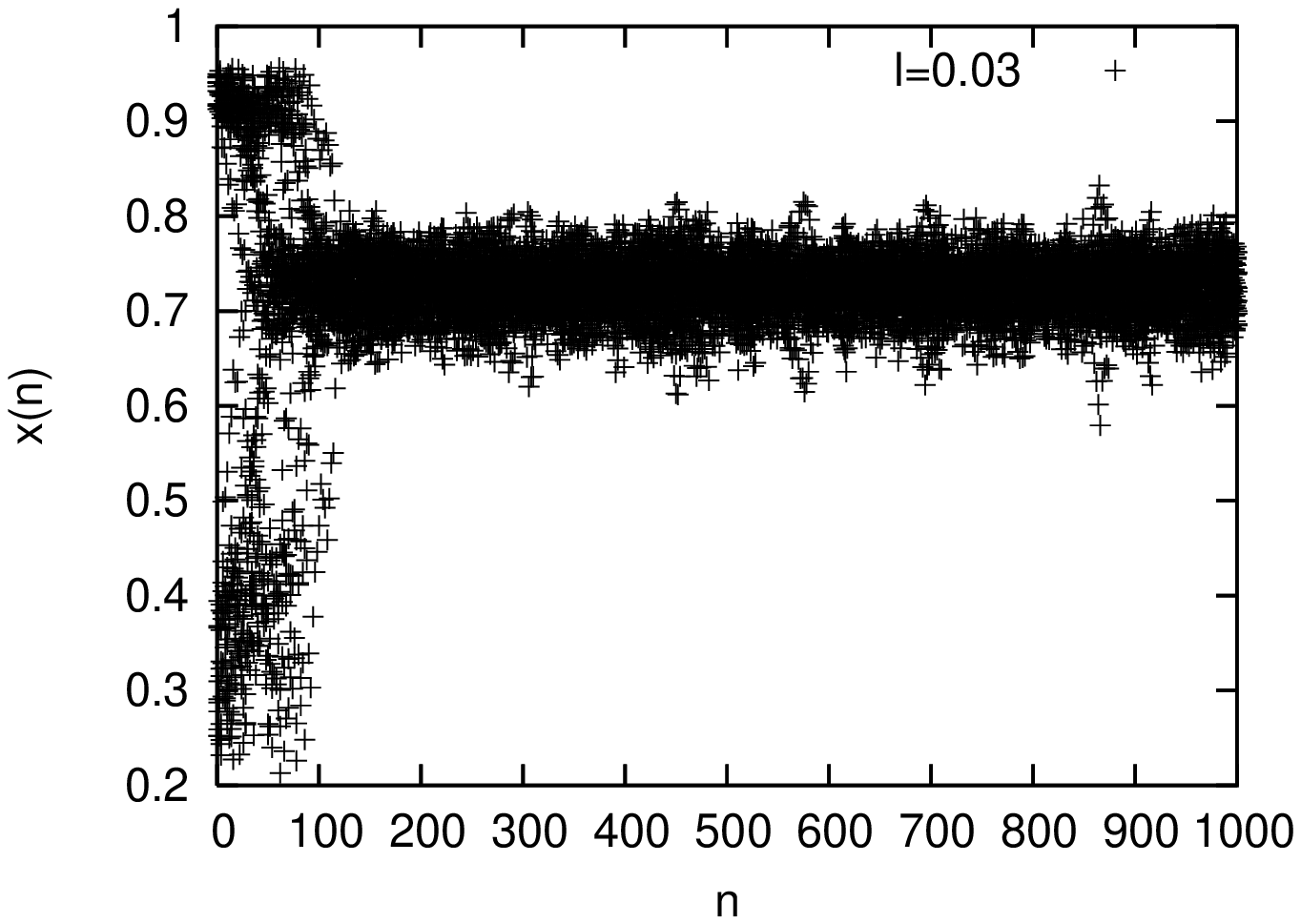}
\caption{Ten runs of the difference equation (\protect{\ref{pert_1}})  with $f$ as in (\protect{\ref{eq:S3.1}}) and
multiplicative stochastic perturbations with (left) $l=0.02$, $x_0=0.3217$ and (right) $l=0.03$,
$x_0=0.3217591$.}
\label{figure5}
\end{figure}

\end{example}  

The theoretical results of the present paper and the numerical simulations of this section illustrate the following conclusions:
\begin{enumerate}
\item
As expected, in the presence of either multiplicative or additive stochastic perturbations,
the unique positive equilibrium can become blurred.
\item
However, for a class of maps that includes commonly occurring models of population dynamics, stochasticity can contribute to the stability of this 
equilibrium. First, the bounds of the control parameter for which any solution of the controlled system 
converges to this (blurred) equilibrium expand. The second relevant issue is that even in the case when
the positive equilibrium of the deterministic equation is not globally attractive, its blurred version can
become attractive under perturbations, as numerical examples illustrate.
\end{enumerate}

\end{document}